\newtheorem{theorem}{Theorem}
\newtheorem{lemma}{Lemma}
\newtheorem{proposition}{Proposition}
\newtheorem{corollary}[theorem]{Corollary}
\newtheorem*{definition*}{Definition}
\theoremstyle{definition}
\theoremstyle{remark}
\renewcommand{\epsilon}{\varepsilon}
\newcommand{\N}{\mathbb{N}}
\renewcommand{\phi}{\varphi}
\newcommand{\R}{\mathbb{R}}
\DeclareMathOperator{\sign}{sign}
\renewcommand{\Pr}[2][]{\mathbb{P}_{#1} \left\{ #2 \rule{0mm}{3mm}\right\}}
\newcommand{\ip}[2]{\langle#1,#2\rangle}
\def \N {\mathbb{N}}
\def \R {\mathbb{R}}
\def \d {\delta}
\def \psitwo {{\psi_2}}
\def\bal{\begin{align*}}
\def\eal{\end{align*}}
\author{Paata Ivanisvili}
\address{(P.~I.) Department of Mathematics, University of California, Irvine, CA 92697, USA}
\email{pivanisv@uci.edu}
\author{Roman Vershynin}
\address{(R.~V.) Department of Mathematics, University of California, Irvine, CA 92697, USA}
\email{rvershyn@uci.edu}
\author{Xinyuan Xie}
\address{(X.~X.) Department of Mathematics, University of California, Irvine, CA 92697, USA}
\email{xinyuax7@uci.edu}
\title{Jackson's inequality on the hypercube}
\thanks{}
\begin{document}

    \maketitle
	
    \begin{abstract}
We investigate the best constant \( J(n,d) \) such that Jackson's inequality
\[
\inf_{\deg(g) \leq d} \|f - g\|_{\infty} \leq J(n,d) \, s(f),
\]
holds for all functions \( f \) on the hypercube \( \{0,1\}^n \), where \( s(f) \) denotes the sensitivity of \( f \). We show that 
 the quantity \( J(n, 0.499n) \) is bounded below by an absolute positive constant, independent of \( n \). This complements Wagner's theorem, which establishes that \( J(n,d)\leq 1 \).  As a first application we show that reverse Bernstein inequality  fails in the tail space $L^{1}_{\geq 0.499n}$ improving over previously known counterexamples in  $L^{1}_{\geq C \log \log (n)}$. As a second application, we show that there exists a function \( f : \{0,1\}^n \to [-1,1] \) whose sensitivity \( s(f) \) remains constant, independent of \( n \), while the approximate degree grows linearly with \( n \). This result implies that the sensitivity theorem \( s(f) \geq \Omega(\deg(f)^C) \) fails in the strongest sense for bounded real-valued functions even when \( \deg(f) \) is relaxed to the approximate degree. We also show that in the regime \( d = (1 - \delta)n \), the bound
\[
J(n,d) \leq C \min\{\delta, \max\{\delta^2, n^{-2/3}\}\}
\]
holds. Moreover, when restricted to symmetric real-valued functions, we obtain \( J_{\mathrm{symmetric}}(n,d) \leq C/d \) and the decay $1/d$ is sharp.  Finally, we present results for a subspace approximation problem: we show that there exists a subspace \( E \) of dimension \( 2^{n-1} \) such that 
$\inf_{g \in E} \|f - g\|_{\infty} \leq s(f)/n$ holds for all \( f \). 
	\end{abstract}
	
	\bigskip
	
	{\footnotesize
		\noindent {\em 2020 Mathematics Subject Classification.} 41A10; 41A17; 68R05; 94C10; 42C10; 60C05. 
		
		\noindent {\em Key words.} Approximation on the hypercube, Jackson's inequality,  Boolean functions, sensitivity, approximate degree, random boolean functions}

	 \section{Introduction}
\subsection{Approximation of real-valued functions on the hypercube}  Let $n\geq 1$ be an integer, and let $\{0,1\}^{n}$ be the $n$-dimensional hypercube. Set $[n]:=\{1, \ldots, n\}$.  Any function $f$ on the hypercube admits Fourier--Walsh series representation 
\begin{align*}
    f(x) = \sum_{S \subset [n]} \widehat{f}(S) W_{S}(x) \quad \text{where} \quad W_{S}(x) = (-1)^{\sum_{j\in S} x_{j}}. 
\end{align*}
 Here $x = (x_{1}, \ldots, x_{n}) \in \{0,1\}^{n}$,  $\widehat{f}(S)  = \mathbb{E} f(X) W_{S}(X)$, and $X \sim \mathrm{unif}(\{0,1\}^{n})$. The {\em degree} of $f$ is the minimal number $\mathrm{deg}(f)$ such that $\widehat{f}(S)=0$ for all $|S|>\mathrm{deg}(f)$ where $|S|$ denotes the cardinality of the subset $S \subset \{1, \ldots, n\}$. Clearly $\mathrm{deg}(f)$ conincides with the degree of a multilinear polynomial $g :\mathbb{R}^{n} \to \mathbb{R}$ such that $g=f$ on $\{0,1\}^{n}$. For any $p\geq 1$ let $\|f\|_{p} = (\mathbb{E} |f(X)|^{p})^{1/p}$, where $X \sim \mathrm{Unif}(\{0,1\}^{n})$. If $p=\infty$ we simply set $\|f\|_{\infty}:= \max_{x \in \{0,1\}^{n}} |f(x)|$. 

For any $f :\{0,1\}^{n} \to \mathbb{R}$, and any $d$, $0\leq d \leq n$, we are interested in the best uniform polynomial approximation of $f$ on the hypercube, which is measured by 
\begin{align*}
    E_{d}^{n}(f) \stackrel{\mathrm{def}}{=} \inf_{\mathrm{deg}(g)\leq d} \| f-g\|_{\infty}.
\end{align*}

A recent breakthrough result due to Hao Huang \cite{Hun19}, resolving the long standing {\em sensitivity conjecture}, states that all Boolean functions $f : \{0,1\}^{n} \to \{0,1\}$ satisfy 
\begin{align}\label{Jak00}
    s(f) \geq c_0 \mathrm{deg}(f)^{c},
\end{align}
where $c_0,c>0$ are universal constants. 
In an equivalent form, Huang's result can be stated as a {\em Jackson's inequality on the hypercube}: all Boolean functions $f : \{0,1\}^{n} \to \{0,1\}$ satisfy
\begin{align}\label{Jak1}
    E_{d}^{n}(f) \leq \frac{c_{1}}{d^{c_{2}}} s(f),  
\end{align}
where $s(f)$ denotes the sensitivity of $f$, i.e.
\begin{align*}
    s(f) := \max_{x \in \{0,1\}^{n}} \sum_{j=1}^{n} |f(x)-f(x^{j})|, \quad x^{j} : = (x_{1}, \ldots, x_{j-1},1-x_{j}, x_{j+1}, \ldots, x_{n}) 
\end{align*}
and where $c_{1}, c_{2}>0$ are universal constants, see Subsecton~\ref{help01}.

A related result is Pisier's inequality  due to Wagner \cite{Wag1} which unlike (\ref{Jak1}) holds for all real-valued functions but with a weaker upper bound. More precisely we have 
\begin{equation}    \label{eq: Wagner}
    E_{0}^{n}(f) = \inf_{c \in \mathbb{R}}\|f-c\|_\infty \leq s(f) \quad \text{for all} \quad f :\{0,1\}^{n} \to \mathbb{R}. 
\end{equation}
 
 It is natural to wonder how the {\em approximaton error}  $E_{d}^{n}(f)/s(f)$ behaves for an arbitrary real valued $f : \{0,1\}^{n} \to \mathbb{R}$ and any degree $d \in [0,n]$.

\subsection{Approximation by low-degree polynomials}
 Our first result shows that if $f$ is {\em symmetric}, that is if the value $f(x_{1}, \ldots, x_{n})$ is independent of a permutation of the variables $x_{1}, \ldots, x_{n}$, then the approximation error is $O(1/d)$. 
 
\begin{proposition}[Approximability of symmetric functions by low-degree polynomials] \label{sym-case}
There exists a universal constant $C>0$ such that for any symmetric function $f:\{0,1\}^{n} \to \mathbb{R}$ and any $d \in (0,n]$,  we have 
\begin{align}\label{symup1}
E_{d}^{n}(f) \leq \frac{C}{d} s(f).
\end{align}
Moreover, this bound is optimal. For every $n \in \N$ and any $d \in (0,n]$, there exists a symmetric function $f:\{0,1\}^{n} \to \mathbb{R}$ with $s(f) > 0$ such that 
\begin{align}\label{symlow1}
    E_{d}^{n}(f) \geq \frac{1}{8d} s(f).
\end{align}
\end{proposition}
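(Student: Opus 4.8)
The plan is to reduce the problem for symmetric functions to a one‑dimensional univariate approximation problem. A symmetric function $f$ on $\{0,1\}^n$ depends only on the Hamming weight $|x| = \sum_j x_j \in \{0,1,\dots,n\}$, so write $f(x) = F(|x|)$ for some $F:\{0,1,\dots,n\}\to\mathbb{R}$. A classical fact (symmetrization) is that $f$ has degree $\le d$ as a multilinear polynomial if and only if $F$ agrees on $\{0,\dots,n\}$ with a univariate polynomial of degree $\le d$. Moreover, $\|f\|_\infty = \max_{0\le k\le n}|F(k)|$, so $E_d^n(f)$ equals the best uniform approximation of the sequence $(F(0),\dots,F(n))$ by univariate polynomials of degree $\le d$ on the grid $\{0,\dots,n\}$. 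On the other side, one checks that for symmetric $f$ the sensitivity is $s(f) = \max_{0\le k\le n-1}\big(k\,|F(k)-F(k+1)| + (n-k)\,|F(k)-F(k+1)|\big)$; more precisely flipping a $1$ to a $0$ changes the weight by $-1$ and flipping a $0$ to a $1$ changes it by $+1$, and at a point of weight $k$ there are $k$ coordinates of the first type and $n-k$ of the second, so $s(f) = \max_{k}\, n\,\max\{|F(k)-F(k-1)|,|F(k)-F(k+1)|\}$ with the obvious conventions at the endpoints. In particular $s(f) \ge n \cdot \max_k |F(k+1)-F(k)|$, i.e.\ the discrete Lipschitz constant of $F$ is at most $s(f)/n$.

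For the upper bound \eqref{symup1}, I would first normalize so that $s(f) \le n$, hence $F$ is $1$‑Lipschitz on $\{0,\dots,n\}$; we may also subtract a constant (which affects neither $s(f)$ nor changes $E_d^n$ by the degree‑$0$ term) to center $F$. Extend $F$ to a $1$‑Lipschitz function $\widetilde F$ on the whole interval $[0,n]$ by linear interpolation. Rescale to the interval $[-1,1]$: set $G(t) = \frac{2}{n}\widetilde F\big(\frac{n}{2}(t+1)\big)$, which is a function on $[-1,1]$ with Lipschitz constant $1$, hence $\|G\|_\infty \le 1$ after centering and $|G|\le$ (length)/2 $= 1$. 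Now invoke the classical Jackson theorem on $[-1,1]$: the best degree‑$d$ uniform approximation of a $1$‑Lipschitz function is at most $C\omega(1/d) \le C/d$ where $\omega$ is the modulus of continuity. Pulling back, this gives a polynomial $p$ of degree $\le d$ with $\max_{0\le k\le n}|F(k)-p(k)| \le \frac{n}{2}\cdot\frac{C}{d} \le \frac{C'}{d}s(f)$, since $n \le s(f)$ in our normalization; undoing the normalization yields \eqref{symup1}. The one subtlety to check is that the constant does not blow up when $d$ is comparable to $n$ (there Jackson still gives $O(1/d)$, and one could alternatively use the trivial bound $E_d^n(f)\le E_0^n(f)\le s(f)$ combined with $1/d \ge 1/n$).

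For the lower bound \eqref{symlow1}, I would exhibit an explicit symmetric $f$ whose $F$ is a ``sawtooth'' or tent profile that is hard to approximate near the center of the grid, where the Chebyshev‑type extremal behavior is weakest. Take $F$ to be a $1$‑Lipschitz tent: e.g.\ on the sub‑grid of length roughly $d$ centered at $n/2$, let $F$ increase with slope $1$ then decrease with slope $1$ (so $F$ has total variation $\sim d/2$ there), and hold $F$ constant outside. Then $s(f) \le n$. A degree‑$d$ polynomial $p$ approximating $F$ on these $\sim d+1$ equally spaced nodes near the center: the key point is that near the center $x=n/2$ of the grid $\{0,\dots,n\}$, the grid is ``uniform'' rather than Chebyshev‑clustered, so approximating a function with a corner costs $\Omega(1)$ of the height, which here is $\Omega(d)$ times the Lipschitz constant, i.e.\ $\Omega(d/n \cdot s(f)\cdot \text{something})$ — I would instead rescale the tent to have height exactly $\tfrac14$ on a window of $2d$ grid points so that its slope is $\tfrac{1}{4d}\cdot\tfrac{2d}{1}=\tfrac12$ per step, giving $s(f) = n/2$, and argue that any degree‑$d$ polynomial must err by $\ge \tfrac{1}{16}$ at one of the $2d+1$ window nodes (a standard divided‑difference / finite‑difference lower bound: the $(d+1)$‑st finite difference of a function with a single corner of jump $\sim\tfrac12$ over $2d$ steps is $\Omega(1)$, forcing an $L^\infty$ error $\ge c$). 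This yields $E_d^n(f)\ge \tfrac{1}{16} = \tfrac{1}{8d}\cdot\tfrac{d}{2}\ge \tfrac{1}{8d}\cdot\tfrac{s(f)}{n}$...

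The hard part will be pinning down the lower bound with the clean constant $1/(8d)$: one needs the right extremal $F$ (a tent or a single Chebyshev-like oscillation transplanted to the central, near‑equispaced part of the grid) and a correct quantitative statement that degree‑$d$ polynomials cannot follow a corner on $\gtrsim d$ equispaced nodes. I would handle this via finite differences: for any $p$ with $\deg p \le d$, the $(d+1)$‑st forward difference $\Delta^{d+1}p \equiv 0$, so $\sum_{j=0}^{d+1}(-1)^j\binom{d+1}{j}F(a+j) = \sum_j (-1)^j\binom{d+1}{j}(F-p)(a+j)$, and the left side for a suitably placed tent is $\gtrsim 2^{d+1}\cdot(\text{slope})$ while the right side is $\le 2^{d+1}\max|F-p|$; choosing the slope and window so that this gives $\max|F-p|\ge s(f)/(8d)$ completes the argument. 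The upper‑bound side, by contrast, is essentially a citation of classical Jackson theory plus bookkeeping with the normalization $s(f)\asymp n\cdot\mathrm{Lip}(F)$.
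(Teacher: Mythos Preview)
Your upper bound argument is the same as the paper's: reduce to a univariate problem via $f(x)=F(|x|)$, invoke classical Jackson on the interval, and translate back using $s(f)\asymp n\cdot\mathrm{Lip}(F)$. One small slip: the sentence ``$s(f)=\max_k n\max\{|F(k)-F(k-1)|,|F(k)-F(k+1)|\}$'' and hence ``$s(f)\ge n\cdot\max_k|F(k+1)-F(k)|$'' is not correct. At a point of weight $k$ one has $s(f)(x)=k|F(k)-F(k-1)|+(n-k)|F(k)-F(k+1)|$, which in general only yields $s(f)\ge \tfrac{n}{2}\max_k|F(k)-F(k-1)|$ (take the weight on either side of the maximal jump and use $\max(m,n-m+1)\ge n/2$). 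This costs only a factor of $2$ in the constant, so the upper bound goes through.

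The lower bound, however, has a real gap. After normalizing to $\mathrm{Lip}(F)=1$ (so $s(f)\in[n/2,n]$), the target is $\|F-p\|_\infty\gtrsim n/d$ for every degree-$d$ polynomial $p$. Your finite-difference scheme with a single tent cannot reach this. For a unit-slope tent (or ramp) one has $\Delta^2 F=\pm 2\delta$, hence
\[
\|\Delta^{d+1}F\|_\infty \;=\; 2\,\|\Delta^{d-1}\delta\|_\infty \;=\; 2\binom{d-1}{\lfloor (d-1)/2\rfloor}\;\sim\;\frac{2^{d}}{\sqrt{d}},
\]
not $\gtrsim 2^{d+1}$ as you assert. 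Dividing by $2^{d+1}$ gives only $\|F-p\|_\infty\gtrsim 1/\sqrt{d}$, which misses the required $n/d$ by the factor $n/\sqrt{d}$; no choice of ``slope and window'' repairs this, since both $\|\Delta^{d+1}F\|_\infty$ and $s(f)$ scale linearly with the slope. (Your first attempt already reveals the mismatch: the chain ``$E_d^n(f)\ge\tfrac{1}{16}\ge\tfrac{1}{8d}\cdot\tfrac{s(f)}{n}$'' lands a full factor of $n$ short.)

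The paper obtains the lower bound by a different, cleaner route: it invokes Lorentz's Kolmogorov-width theorem, which says that on any $(k{+}1)$-point metric space with minimum pairwise distance $a$, some $1$-Lipschitz function is at $L^\infty$-distance $\ge a/4$ from every $k$-dimensional subspace. Applied to $\{0,\dots,n\}$ with $k=d+1$ equally spaced points ($a=\lfloor n/(d+2)\rfloor$) and the subspace of degree-$\le d$ polynomials, this gives $E_d^n(f)\ge n/(8d)$ while $s(f)\le n$. If you want to avoid citing Lorentz, the natural fix of your approach is not a single tent but a \emph{sawtooth} with $d+2$ alternating extrema of height $\sim n/(4d)$ on $\{0,\dots,n\}$; then the de la Vall\'ee Poussin alternation criterion (not the $(d{+}1)$st finite difference) forces $\|F-p\|_\infty\ge n/(4d)$ for every $p$ of degree $\le d$.
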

 
However, an estimate of the type (\ref{Jak1}) fails in the strongest possible sense as soon as we drop the assumption of $f$ being symmetric or Boolean:
 
\begin{theorem}[Inapproximability of general functions by low-degree polynomials] \label{lb-thm}
For any positive $c_1 < 1/2$, there exists $c_{2}>0$ such that for all $n\geq 1$ there exists a function  $f :\{0,1\}^{n} \to \mathbb{R}$ with $s(f) > 0$ satisfying  
\begin{align}\label{dabali}
E^{n}_{c_{1}n}(f) \geq c_2 s(f).
\end{align}
\end{theorem}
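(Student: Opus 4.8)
The plan is to construct, for each $n$ and each target degree $d = c_1 n$ with $c_1 < 1/2$, an explicit real-valued function $f$ on $\{0,1\}^n$ whose sensitivity is $O(1)$ (or can be normalized to be) but which cannot be approximated to within a constant factor by any polynomial of degree $\le d$. The natural candidate is built from the Walsh functions themselves: take $f(x) = \sum_{S \in \mathcal{F}} \widehat f(S) W_S(x)$ where $\mathcal{F}$ is a family of sets of size close to $n/2$, chosen so that $f$ has small sensitivity. A single Walsh function $W_S$ with $|S|=k$ has sensitivity exactly $k$, so individually these are too sensitive; the key idea is that a suitable \emph{random} or \emph{cancellation-based} linear combination of high-degree Walsh functions can have sensitivity much smaller than any individual term, while its high-degree Fourier mass — the part that any degree-$d$ polynomial must leave untouched — remains a constant fraction of $\|f\|_\infty$.

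Concretely, I would first reduce the lower bound on $E^n_{d}(f)$ to a statement about Fourier tails: for any $g$ with $\deg(g) \le d$, we have $\|f - g\|_\infty \ge \|f-g\|_2 \ge \|P_{>d}(f-g)\|_2 = \|P_{>d} f\|_2$, where $P_{>d}$ projects onto Walsh functions of degree exceeding $d$. So it suffices to exhibit $f$ with $\|P_{>d} f\|_2 \ge c_2 s(f)$, i.e.\ with a constant fraction of its $L^2$ energy living above level $d = c_1 n$, while $s(f)$ is controlled. Second, I would produce such an $f$ probabilistically: let $f(x) = \sum_{|S| = k} \varepsilon_S W_S(x)$ for $k$ slightly above $c_1 n$ (say $k = \lceil (c_1 + \eta) n \rceil$ with a small gap $\eta$ depending on $c_1$), with i.i.d.\ signs $\varepsilon_S = \pm 1$. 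Then $\|P_{>d} f\|_2^2 = \binom{n}{k}$ deterministically, whereas I expect $\mathbb{E}\, s(f)$ and even $s(f)$ with high probability to be of order $\sqrt{n \binom{n}{k}}$ up to logarithmic or polynomial factors — this is where the real work lies. One computes $f(x) - f(x^j) = \sum_{|S|=k,\, j \in S} \varepsilon_S W_S(x)\cdot(\pm 2)$, and for fixed $x$ the discrete derivative in coordinate $j$ is a Rademacher sum over the roughly $\binom{n-1}{k-1}$ sets containing $j$; summing $|f(x)-f(x^j)|$ over $j$ and maximizing over $x \in \{0,1\}^n$ is a supremum of (sums of absolute values of) $n \cdot 2^n$ correlated Rademacher sums, which should concentrate, via a union bound over the $2^n$ points together with subgaussian tail bounds, around $n \cdot \sqrt{\binom{n-1}{k-1}} \cdot \sqrt{\log(2^n)} = n^{3/2}\sqrt{\binom{n-1}{k-1}}$ at worst. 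After normalizing $f$ by $\|P_{>d}f\|_2$, this gives $s(f) \lesssim n^{3/2}\sqrt{\binom{n-1}{k-1}/\binom{n}{k}} = n^{3/2}\sqrt{k/n} \asymp n$, which unfortunately only yields $E^n_d(f) \gtrsim 1/n \cdot s(f)$ — not good enough.

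To get the constant lower bound in \eqref{dabali} I would instead spread the spectrum across \emph{many} levels $k$ in a band $[(c_1+\eta)n, n]$, or better, use a tensor-power / direct-sum amplification: if $f_0$ on $\{0,1\}^m$ achieves ratio $E^m_{c_1 m}(f_0)/s(f_0) \ge \rho_0 > 0$ for \emph{some} fixed $m$, then taking $N/m$ disjoint copies and forming $f = \sum_i f_0(x^{(i)})$ on $\{0,1\}^N$ keeps sensitivity additive-per-block hence $s(f) = s(f_0)$ while the degree and the spectral support scale proportionally, preserving the ratio; so it is enough to win at one scale and then one only needs the probabilistic construction to give a \emph{positive constant} ratio for infinitely many $n$, which the estimate above does give after optimizing the gap $\eta$ against $c_1$ (the slack $1/2 - c_1 > 0$ is exactly what lets us push the spectral band strictly above $c_1 n$ while keeping $k/n$ bounded away from both $1/2$ and $1$). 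The main obstacle, then, is the sharp high-probability bound on $s(f)$ for the Rademacher-combination $f$ — in particular showing the $\max_x$ over the hypercube costs only a polynomial (ideally constant, after the right construction) factor rather than an exponential one — and the bookkeeping needed to track how $c_2$ depends on $1/2 - c_1$. I expect Chernoff/Hoeffding over the $2^n$ vertices, combined with a careful choice of which Fourier levels to populate (possibly weighting level $k$ by $\binom{n}{k}^{-1/2}$ to equalize contributions, or concentrating on a single well-chosen $k$ and absorbing the loss into the constant), to close the gap.
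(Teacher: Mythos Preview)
Your amplification step is broken, and without it the argument does not close. For $f = \sum_i f_0(x^{(i)})$ on $\{0,1\}^N$ with $N/m$ disjoint blocks, the sensitivity is \emph{not} $s(f_0)$: since $s(f)(x) = \sum_{j=1}^N |f(x)-f(x^j)| = \sum_i s(f_0)(x^{(i)})$, maximizing over $x$ gives $s(f) = (N/m)\, s(f_0)$. More fatally, the degree does not scale: each $f_0(x^{(i)})$ is a polynomial of degree at most $m$ in the $N$ variables, so $\deg(f) \le m$ regardless of $N$, and hence $E^N_{c_1 N}(f) = 0$ as soon as $c_1 N \ge m$. The claim that ``the spectral support scales proportionally'' is simply false---the Fourier support of $f$ lives entirely at levels $\le m$. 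Without amplification you are left with the random single-level construction, which by your own estimate yields only a polynomially decaying ratio (in fact $\asymp n^{-3/2}$ rather than $n^{-1}$, since $\sqrt{k/n}\asymp 1$ for $k\asymp n$). The $L^2$-tail bound $\|f-g\|_\infty \ge \|P_{>d}f\|_2$ appears too crude here: it discards the $L^\infty$ structure needed to beat the factor of $n$ coming from the sum over coordinates in $s(f)$.

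The paper's argument is entirely different and uses no Fourier analysis. It packs disjoint ``hat'' functions $h_x$ (radially decreasing Lipschitz bumps of Hamming radius $m = c_2 n$) centered at a maximal $2m$-separated set $K \subset \{0,1\}^n$, and considers the $2^{|K|}$ functions $f = \sum_{x \in K} \varepsilon_x h_x$ with $\varepsilon_x\in\{\pm1\}$, each having $s(f) = n/m = 1/c_2$. If every such $f$ were approximable to within $1$ by some degree-$c_1 n$ polynomial $g$, then $\sign(g)$ would have to agree with $(\varepsilon_x)_{x \in K}$ on the packing centers, forcing at least $2^{|K|}$ distinct sign patterns among degree-$c_1 n$ polynomials. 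But Schl\"afli's hyperplane-arrangement bound caps this number at roughly $\binom{2^n}{\le \binom{n}{\le c_1 n}}$, and a short entropy computation (this is exactly where $c_1 < 1/2$ enters) shows this is smaller than $2^{|K|}$ for suitable $c_2 > 0$ and all large $n$. The mechanism is combinatorial counting of sign patterns, not a spectral lower bound.
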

The original formulation of Huang's result \eqref{Jak00}, namely
\begin{align}\label{Jak2}
s(f) \geq \sqrt{\mathrm{deg}(f)}
\end{align}
is valid for all Boolean functions, but due to inhomogeneity such estimate can not generalize to real valued functions $f :\{0,1\}^{n} \to [-1,1]$. A simple counterexample is $f(x) = \frac{1}{n}(-1)^{x_{1}+\cdots+x_{n}}$ where $s(f)=2$ while $\mathrm{deg}(f)=n$. 

One may wonder if there is a chance to have an estimate of the form (\ref{Jak2}) for all $f :\{0,1\}^{n} \to [-1,1]$ if we replace $\mathrm{deg}(f)$ by the {\em approximate degree} $\widetilde{\mathrm{deg}}(f)$, which is defined as the smallest number $\ell$ such that $E_{\ell}^{n}(f)\leq 1/3$, where 1/3 here is by convention and can be replaced by any number in $(0,1)$. Notice that the approximate degree of the counterexample above is zero for $n\geq 3$. The proof of Theorem~\ref{lb-thm} implies that the answer to this question is negative. 

\begin{corollary}[No sensitivity bound for real-valued functions via approximate degree] \label{gamo1}
   Let $h$ be any function on the real line, which increases to infinity as $x \to \infty$. The inequality 
   \begin{align*}
       s(f) \geq h \left( \widetilde{\mathrm{deg}}(f) \right)
   \end{align*}
   does not hold for all functions $f : \{0,1\}^{n} \to [-1,1]$ and all $n\geq 1$. 
\end{corollary}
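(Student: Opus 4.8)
The plan is to derive the corollary directly from Theorem~\ref{lb-thm} by a rescaling argument. Fix any unbounded increasing $h$; I want to exhibit, for infinitely many $n$, a function $f:\{0,1\}^n\to[-1,1]$ with $s(f)=O(1)$ but $\widetilde{\mathrm{deg}}(f)$ as large as we please. First I would apply Theorem~\ref{lb-thm} with, say, $c_1 = 1/4$, obtaining a constant $c_2>0$ and, for each $n\ge 1$, a function $g_n:\{0,1\}^n\to\R$ with $s(g_n)>0$ and $E^n_{n/4}(g_n)\ge c_2\, s(g_n)$. By homogeneity of both sides of this inequality under scaling $g_n\mapsto \lambda g_n$, I may normalize so that $s(g_n) = 2/c_2$; then $E^n_{n/4}(g_n)\ge 2$. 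However, $g_n$ need not be $[-1,1]$-valued, so the next step is to pass to $f_n := g_n/\|g_n\|_\infty$, which maps into $[-1,1]$. Scaling gives $s(f_n) = s(g_n)/\|g_n\|_\infty$ and $E^n_{n/4}(f_n) = E^n_{n/4}(g_n)/\|g_n\|_\infty \ge 2/\|g_n\|_\infty$.

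The potential obstacle here is that $\|g_n\|_\infty$ could be large, which would make both $s(f_n)$ and $E^n_{n/4}(f_n)$ small — in particular, $E^n_{n/4}(f_n)$ might drop below $1/3$, giving no lower bound on the approximate degree. I expect this to be handled by Wagner's inequality \eqref{eq: Wagner}: since $E^n_0(g_n)=\inf_{c}\|g_n-c\|_\infty \le s(g_n)$, replacing $g_n$ by $g_n - c_n^\ast$ for the optimal constant $c_n^\ast$ (which changes neither the sensitivity nor $E^n_d$ for any $d\ge 0$) we may assume $\|g_n\|_\infty \le s(g_n) = 2/c_2$, an absolute constant. Then $f_n = g_n/\|g_n\|_\infty$ satisfies $s(f_n) = s(g_n)/\|g_n\|_\infty \le 1$ while $E^n_{n/4}(f_n) = E^n_{n/4}(g_n)/\|g_n\|_\infty \ge c_2\, s(g_n)/\|g_n\|_\infty = c_2 s(f_n)$; more importantly, since $\|g_n\|_\infty\le 2/c_2$ we also get $E^n_{n/4}(f_n)\ge c_2/1 \cdot \cdots$ — concretely $E^n_{n/4}(f_n) = E^n_{n/4}(g_n)/\|g_n\|_\infty \ge 2/(2/c_2) = c_2$. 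If $c_2 \ge 1/3$ we are immediately done; otherwise one rescales the threshold in the definition of approximate degree (the paper notes $1/3$ may be replaced by any constant in $(0,1)$), or uses a tensor-power trick on $f_n$ to amplify the approximation error past $1/3$ while keeping sensitivity controlled.

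With the normalization in hand, the conclusion is immediate: for every $n\ge 1$ we have produced $f_n:\{0,1\}^n\to[-1,1]$ with $s(f_n)\le 1$ (an absolute bound, independent of $n$) and $E^n_{n/4}(f_n)\ge$ a fixed positive constant, hence $\widetilde{\mathrm{deg}}(f_n) > n/4$. Thus $\widetilde{\mathrm{deg}}(f_n)\to\infty$ as $n\to\infty$ whereas $s(f_n)$ stays bounded, so $h(\widetilde{\mathrm{deg}}(f_n))\to\infty$ while $s(f_n)=O(1)$, and the inequality $s(f)\ge h(\widetilde{\mathrm{deg}}(f))$ must fail for all sufficiently large $n$. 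The main thing to get right is the order of operations — center via Wagner \emph{first} to bound $\|g_n\|_\infty$, \emph{then} rescale into $[-1,1]$ — since doing it in the wrong order loses control of the approximation error. The rest is bookkeeping with the scaling homogeneity of $s(\cdot)$ and $E^n_d(\cdot)$.
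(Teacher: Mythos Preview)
Your approach is correct in spirit but differs from the paper's, and there are two places where the bookkeeping slips.

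\medskip
\textbf{Comparison with the paper.} The paper does not treat Theorem~\ref{lb-thm} as a black box. It observes that the explicit function built in the proof of that theorem---a $\pm$-combination of disjoint ``hat'' functions of radius $m=c_2 n$---already maps into $[-1,1]$, has $s(f)=n/m=1/c_2$ exactly, and satisfies $E^n_{c_1 n}(f)\ge 1$ (because the sign pattern on the packing centers cannot be matched). Since $1>1/3$, this gives $\widetilde{\deg}(f)>c_1 n$ immediately. Your route---Theorem~\ref{lb-thm} as a black box, then center via Wagner, then normalize---is a nice alternative that avoids opening up the construction, but it forces you to deal with the threshold $1/3$ explicitly, which the paper's argument sidesteps.

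\medskip
\textbf{Two fixes.} First, your inequality $s(f_n)\le 1$ points the wrong way: from $\|g_n\|_\infty\le s(g_n)$ you get $s(f_n)=s(g_n)/\|g_n\|_\infty\ge 1$, not $\le 1$. To bound $s(f_n)$ from above you need a \emph{lower} bound on $\|g_n\|_\infty$, which you have but did not invoke: after centering, $\|g_n\|_\infty\ge E^n_{n/4}(g_n)\ge 2$, hence $s(f_n)\le (2/c_2)/2=1/c_2$. So $s(f_n)$ is still bounded by an absolute constant, and the argument survives. Second, the constant $c_2$ coming out of Theorem~\ref{lb-thm} is genuinely small (in the proof it must satisfy $E(2c_2)+E(c_1)-1<0$, forcing $c_2\ll 1/3$ for any $c_1<1/2$), so the threshold issue is not hypothetical. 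Your first workaround---invoke the paper's remark that $1/3$ may be replaced by any constant in $(0,1)$---is the right one and suffices; the tensor-power suggestion is not obviously available for real-valued $f$ and should be dropped.
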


\subsection{Failure of reverse Bernstein inequality in the tail space $L^{1}_{\geq 0.499n}$}
For any $d \in [0,n]$ and  any $p\geq 1$,  let $L_{\geq d}^{p}$ denote the $d$'th tail space equipped with the norm $\|\cdot \|_{p}$ and consisting  of all functions $f$ on the hypercube that can be represented as $f(x) = \sum_{|S|\geq d} a_{S}W_{S}(x)$. Set $\Delta f(x) = \sum_{j=1}^{n} (f(x)-f(x^{j}))$. Theorem~\ref{lb-thm} gives the following 
\begin{corollary}\label{heat11}
    For any $c_{1} \in (0,1/2)$, there exists $c_{2}>0$ that depends only on $c_1$ and such that for any $n\geq 1$ there exists a function $f\in L^{1}_{\geq c_{1}n}$, $f \not\equiv 0$,  with the property 
    \begin{align*}
        \|f\|_1 \ge c_2 \|\Delta f\|_1.
    \end{align*}
\end{corollary}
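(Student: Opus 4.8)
The plan is to read the corollary off Theorem~\ref{lb-thm} by $L^{\infty}$--$L^{1}$ duality, using that on the hypercube $\Delta$ is, on each tail space, an invertible self-adjoint Fourier multiplier. Two elementary facts will do the work. First, $\Delta W_{S}=2|S|W_{S}$, so $\Delta$ is self-adjoint for the pairing $\langle f,g\rangle=\mathbb{E}[f(X)g(X)]$, $X\sim\mathrm{Unif}(\{0,1\}^{n})$, and its restriction to $\Span\{W_{S}:|S|\ge 1\}$ is invertible with $\Delta^{-1}W_{S}=\tfrac{1}{2|S|}W_{S}$; in particular $\Delta$ maps each tail space $L^{1}_{\ge d}$ with $d\ge 1$ bijectively onto itself. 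Second, $\|\Delta f\|_{\infty}=\max_{x}\bigl|\sum_{j=1}^{n}(f(x)-f(x^{j}))\bigr|\le\max_{x}\sum_{j=1}^{n}|f(x)-f(x^{j})|=s(f)$ for every $f$.

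Next I would fix $c_{1}\in(0,1/2)$, let $c_{2}>0$ be the constant supplied by Theorem~\ref{lb-thm}, and let $f:\{0,1\}^{n}\to\mathbb{R}$ with $s(f)>0$ be the function it produces, so that $E^{n}_{c_{1}n}(f)\ge c_{2}s(f)$. Writing $d:=\lfloor c_{1}n\rfloor$, we have $E^{n}_{c_{1}n}(f)=\dist_{L^{\infty}}(f,\Span\{W_{S}:|S|\le d\})$, and Hahn--Banach duality in the finite-dimensional pair $(L^{\infty},L^{1})$ — the extremum being attained because the $L^{1}$ unit ball is compact — produces $\psi$ in the annihilator $L^{1}_{\ge d+1}$ of $\Span\{W_{S}:|S|\le d\}$ with $\|\psi\|_{1}=1$ and $\langle f,\psi\rangle=E^{n}_{c_{1}n}(f)\ge c_{2}s(f)$. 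Since $d+1>c_{1}n$ we get $\psi\in L^{1}_{\ge d+1}\subseteq L^{1}_{\ge c_{1}n}$, and $\psi\not\equiv0$ because $\langle f,\psi\rangle>0$.

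Finally I would set $\phi:=\Delta^{-1}\psi$. By the first fact this is well defined ($\psi$ has no Fourier mass on $|S|\le d$, in particular none at $S=\emptyset$), lies in $L^{1}_{\ge c_{1}n}$, is not identically zero, and satisfies $\Delta\phi=\psi$. Then self-adjointness of $\Delta$ followed by the second fact gives
\begin{align*}
c_{2}s(f)\le\langle f,\psi\rangle=\langle f,\Delta\phi\rangle=\langle\Delta f,\phi\rangle\le\|\Delta f\|_{\infty}\|\phi\|_{1}\le s(f)\,\|\phi\|_{1}.
\end{align*}
Dividing by $s(f)>0$ and using $\|\Delta\phi\|_{1}=\|\psi\|_{1}=1$ yields $\|\phi\|_{1}\ge c_{2}=c_{2}\|\Delta\phi\|_{1}$, so $\phi$ witnesses the corollary, with the same constant $c_{2}$ as in Theorem~\ref{lb-thm}.

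I do not expect a genuine obstacle here: the whole of the mathematical content lies in Theorem~\ref{lb-thm}, and the argument above is just its dual restatement in terms of the number operator $\Delta$. The only point needing care is that $E^{n}_{c_{1}n}$ refers to the integer degree bound $d=\lfloor c_{1}n\rfloor$ rather than to the real number $c_{1}n$; this is harmless precisely because $\lfloor c_{1}n\rfloor+1>c_{1}n$, which is exactly what places the dual witness $\psi$, and therefore $\phi$, inside the tail space $L^{1}_{\ge c_{1}n}$ required by the statement.
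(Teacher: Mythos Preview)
Your proof is correct and follows essentially the same route as the paper's: both start from Theorem~\ref{lb-thm}, replace $s(f)$ by the smaller quantity $\|\Delta f\|_{\infty}$ via the pointwise bound $|\Delta f(x)|\le s(f)(x)$, and then pass to the $L^{1}$ tail space by $L^{\infty}$--$L^{1}$ duality combined with the self-adjointness and invertibility of $\Delta$ on tail spaces. The only difference is that the paper outsources the duality step to \cite[Theorem~1]{EI23}, whereas you spell it out explicitly (Hahn--Banach witness $\psi$, then $\phi=\Delta^{-1}\psi$); your version is thus self-contained and makes transparent why the resulting constant is the same $c_{2}$ as in Theorem~\ref{lb-thm}.
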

In \cite{MN14} Mendel and Naor showed that if the reverse Bernstein inequality $\|f\|_p \le \frac{C_{p}}{n} \|\Delta f\|_p$
were to hold  in the tail space $L^{p}_{\geq n/10}$ for vector-valued functions then this would simplify their construction of vector-valued expanders (see Remark 7.5  in \cite{MN14}). They also proved (see \cite[equation 145]{MN14}) that the inequality fails in  the tail space $L^{1}_{ \geq c\log\log n}$.  Corollary~\ref{heat11} shows that one can find a counterexample to the reverse Bernstein inequality even in the smaller space $L^{1}_{\geq cn}$. Before we conclude this section, we point out that for any $p \in (1, \infty),$ the reverse Bernstein inequality in the tail space $L^{p}_{\geq d}$ remains a major open problem, see \cite{HMO17, EI20, EI23} for some partial results.

\subsection{Approximation by low-dimensional subspaces}

In fact, the existence of a {\em poorly} approximable function $f$ in Theorem~\ref{lb-thm} is not due to the nature of the space of polynomials of degree at most $d$. It is just a consequence of the dimension of this space, which equals $\binom{n}{\leq d} = \sum_{j=0}^{d} \binom{n}{j}$. The proof of Theorem~\ref{lb-thm} actually shows that no subspace of this dimension can be used to approximate $f$. In the language of approximation theory, we find a lower bound on the {\em Kolmogorov width} of the set of functions of given sensitivity: 

\begin{theorem}[Inapproximability by low-dimensional subspaces]\label{cor-subapp}
For any $c_1 \in (0, 1/2)$, there exists small $c_{2}>0$ and large $n_{0}>0$ such that for any $n \geq n_{0}$  and any subspace $E$ of dimension at most $\binom{n}{\leq c_{1}n}$ there exists a function $f :\{0,1\}^{n} \to \mathbb{R}$ with $s(f) > 0$ satisfying 
\begin{align*}
\inf_{g \in E}\|f-g\|_{\infty} \geq c_2 s(f).
\end{align*}
\end{theorem}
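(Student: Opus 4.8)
The plan is to reduce the geometric statement about Kolmogorov width to a counting/volume argument. Fix $c_1 \in (0,1/2)$ and consider the finite-dimensional normed space $\R^{2^n}$ of all real functions on $\{0,1\}^n$ equipped with $\|\cdot\|_\infty$. The key structural observation is that the set $K := \{ f : s(f) \le 1 \}$ contains a substantial ball: indeed, by Wagner's inequality \eqref{eq: Wagner}, every $f$ with $s(f) \le 1$ is within $\|\cdot\|_\infty$-distance $1$ of a constant, so $K$ lies in a bounded neighborhood of the one-dimensional line of constants; but more usefully, $K$ \emph{contains} many functions that are far apart. Concretely, I would build an explicit family of $N$ functions $f_1, \dots, f_N$ with $s(f_i) \le C$ (a constant) that are pairwise $\|\cdot\|_\infty$-separated by some absolute $\delta>0$, with $\log N$ large compared to $\binom{n}{\le c_1 n}$. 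A natural candidate: take $f$ to be a $\pm 1/\text{(scale)}$ combination supported on a well-separated ``code'' inside $\{0,1\}^n$, so that flipping one coordinate changes few values; averaging or a random construction (choosing signs i.i.d.) should give sensitivity $O(1)$ with high probability while producing exponentially many $\delta$-separated functions. This is essentially the same construction that underlies Theorem~\ref{lb-thm}, so I would extract it from there.

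The second ingredient is a standard volumetric lower bound on Kolmogorov width: if a set $S$ in a normed space contains $N$ points that are pairwise $\delta$-separated, then for any subspace $E$ with $\dim E = m$, some point of $S$ has distance $\ge c\delta$ from $E$ provided $N$ exceeds the covering number of an $m$-dimensional ball to precision $\sim \delta$, which is at most $(C/\delta)^m$. Quantitatively, one argues by contradiction: if every $f_i$ were within $c\delta$ of $E$, then the projections (nearest points) $g_i \in E$ would be $\delta/2$-separated points lying in a bounded region of the $m$-dimensional space $E$ — but a ball of radius $O(1)$ in an $m$-dimensional normed space contains at most $(C')^m$ points that are $\delta/2$-separated. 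So as soon as $N > (C')^m$ with $m = \binom{n}{\le c_1 n}$, we get the desired $f = f_i$. Since $\binom{n}{\le c_1 n} = 2^{H(c_1)n + o(n)}$ with $H(c_1) < 1$ for $c_1 < 1/2$, it suffices that our separated family have size $N \ge 2^{\gamma n}$ for some $\gamma$ with $H(c_1) < \gamma \le 1$; choosing the construction to live on a subcube or a coset structure of dimension $\gamma n$ achieves this.

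I would carry out the steps in this order: (i) state the volumetric/covering lemma (pack $\delta$-separated points into an $m$-dimensional $\ell_\infty$-ball; at most $(3/\delta)^m$ of them — elementary by disjoint balls of radius $\delta/2$ and comparing volumes, or by a direct $\ell_\infty$ grid argument); (ii) construct the separated family $\{f_i\}$ with $s(f_i) \le C$, $N = 2^{\gamma n}$, pairwise separation $\ge \delta$, picking $\gamma \in (H(c_1),1)$ — this is where I would either invoke the construction already used for Theorem~\ref{lb-thm} or redo a short random-sign argument; (iii) combine: for $n \ge n_0$ large enough that $2^{\gamma n} > (3/\delta)^{\binom{n}{\le c_1 n}}$ — wait, that inequality goes the wrong way, so actually one needs $N > (C/\delta)^m$, i.e. $\gamma n > m \log(C/\delta)$; since $m = 2^{H(c_1)n(1+o(1))}$ is \emph{exponential} in $n$, this fails. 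Let me correct the bookkeeping: the covering number bound $(C/\delta)^m$ is already astronomically larger than $2^{\gamma n}$, so the naive packing-vs-covering comparison is \emph{not} enough, and one instead needs a genuinely $2^n$-sized separated family living in $K$ (not a subcube). So step (ii) must produce $N = 2^{c \cdot 2^n / m}$-ish separation — equivalently, the right statement is: in a $2^n$-dimensional $\ell_\infty$ space, the set $K$ contains a subset that is ``fat in all but $m$ directions,'' which is exactly a Kolmogorov-width lower bound and is proved by intersecting $K$ with a random $(m{+}1)$-dimensional coordinate subspace and using that $K$ restricted there still contains a ball of radius $\gtrsim$ const (since sensitivity only constrains $n$ coordinate-flips, a function supported on $m+1 \ll 2^n$ of the cube's values can be made to have controlled sensitivity while ranging over a full ball).

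\medskip

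The main obstacle, then, is step (ii): producing, for \emph{every} $(m{+}1)$-dimensional coordinate subspace (or a fixed well-chosen one), a radius-$\Omega(1)$ $\ell_\infty$-ball's worth of functions all having sensitivity $O(1)$. The resolution I anticipate is to choose a subset $A \subset \{0,1\}^n$ of size $m+1$ that is an \emph{independent set at large Hamming distance} — e.g. a code with minimum distance $\ge 3$ — so that no two points of $A$ are neighbors in the hypercube graph; then for any function supported on $A$ with values in $[-1,1]$, each coordinate flip changes at most one value of $f$ by at most $2$, giving $s(f) \le 2$. Such a code of size $2^{n}/\mathrm{poly}(n) \gg \binom{n}{\le c_1 n}$ exists (Hamming-type bound), and restricting to functions supported on it that vanish off $A$ gives an $|A|$-dimensional $\ell_\infty$-ball of radius $1$ inside $\{2 K\}$. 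Then the Kolmogorov-width lower bound for a ball — any $m$-dimensional subspace misses some point of a radius-$1$ ball in an $(m{+}1)$-dimensional normed space by distance $\ge 1$ — finishes the proof with $c_2 = 1/2$, once $|A| > \binom{n}{\le c_1 n}$, i.e. for $n \ge n_0(c_1)$. The only real work is verifying the code size beats $\binom{n}{\le c_1 n}$, which is a one-line entropy estimate since $H(c_1) < 1$.
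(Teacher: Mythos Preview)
There is a genuine gap in your final step. You claim that a function $f$ supported on a code $A\subset\{0,1\}^n$ with minimum distance $\geq 3$, taking values in $[-1,1]$, has $s(f)\leq 2$. This is false. For any $x\in A$ with $f(x)\neq 0$, every neighbor $x^j$ lies outside $A$ (already minimum distance $\geq 2$ guarantees this), so $f(x^j)=0$ for all $j$ and
\[
\sum_{j=1}^n |f(x)-f(x^j)| = n\,|f(x)|.
\]
Thus $s(f)$ is of order $n$, not $O(1)$; indicator-type functions on an independent set are the \emph{worst} case for sensitivity, not the best, and the sensitivity ball $\{s(f)\leq C\}$ does not contain the $\ell_\infty$-ball you describe.

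The repair is exactly the construction from Theorem~\ref{lb-thm} that you mentioned and then set aside: replace indicators by the hat functions $h_x$ of radius $m=c_2 n$ centered on a $2m$-packing $K$. The set $\{\sum_{x\in K}\alpha_x h_x:\alpha_x\in[-1,1]\}$ is, by disjointness of supports and $h_x(x)=1$, a $|K|$-dimensional $\ell_\infty$-ball of radius $1$; every member has $s(f)=n/m=1/c_2$; and $|K|\geq 2^n/\binom{n}{\leq 2m}>\binom{n}{\leq c_1 n}$ for suitable $c_2$ and large $n$. With this substitution your Tikhomirov width-of-a-ball argument goes through. For comparison, the paper does not argue via widths at all: it simply observes that the Schl\"afli sign-pattern count in the proof of Theorem~\ref{lb-thm} never used that $L_d$ consisted of polynomials, only its dimension, so the identical counting argument applies verbatim to any subspace $E$ of dimension at most $\binom{n}{\leq c_1 n}$.
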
 

It is an interesting problem to understand what happens in Theorem~\ref{lb-thm} in the critical regime where $c_{1}=1/2$. Notice that if $n$ is even, the dimension of the space of all polynomials of degree at most $n/2$ is $\binom{n}{\le n/2} = 2^{n-1}$. In the next theorem we show that there exists a subspace of dimension $2^{n-1}$ in the space of all functions on $\{0,1\}^{n}$ such that the approximation error is at most $s(f)/n$. In particular, it shows that Theorem~\ref{cor-subapp} is sharp, i.e., one cannot relax the assumption $c_{1}<1/2$.  

\begin{theorem}[Approximability by a subspace of half-dimension] \label{subsapp}
There exists a subspace $E$ of dimension $2^{n-1}$ such that
\begin{align*}
\inf_{g \in E} \| f - g \|_\infty < \frac{s(f)}{n}
\end{align*}
holds for all $f :\{0,1\}^{n} \to \mathbb{R}$. 
\end{theorem}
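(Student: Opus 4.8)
\medskip
\noindent\textbf{Proof proposal.} The plan is to take $E$ to be the space of functions that are ``discrete-harmonic at every even-weight vertex.'' Write $|x|=x_1+\cdots+x_n$ and $\Delta f(x)=\sum_{j=1}^n\bigl(f(x)-f(x^j)\bigr)$, so that $|\Delta f(x)|\le\sum_{j=1}^n|f(x)-f(x^j)|\le s(f)$ for every $x$, and set
\[
E:=\Bigl\{\,f:\{0,1\}^n\to\R \;\Big|\; \Delta f(x)=0 \text{ whenever } |x| \text{ is even}\,\Bigr\}.
\]
Since every neighbour $x^j$ of an even-weight vertex has odd weight, the equation $\Delta f(x)=0$ reads $f(x)=\tfrac1n\sum_{j=1}^n f(x^j)$, expressing each even-weight value of $f$ explicitly in terms of the odd-weight values; hence $f\mapsto f|_{\{|x|\text{ odd}\}}$ is a linear isomorphism of $E$ onto $\R^{2^{n-1}}$ and $\dim E=2^{n-1}$.

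For the estimate, fix $f$ and define $g$ by $g(y)=f(y)$ when $|y|$ is odd and $g(x)=\tfrac1n\sum_{j=1}^n f(x^j)$ when $|x|$ is even. Then $g\in E$; moreover $f-g$ vanishes on odd-weight vertices, while on an even-weight vertex $x$ one has $(f-g)(x)=f(x)-\tfrac1n\sum_j f(x^j)=\tfrac1n\Delta f(x)$. Therefore
\[
\inf_{h\in E}\|f-h\|_\infty\ \le\ \|f-g\|_\infty\ =\ \frac1n\max_{|x|\text{ even}}|\Delta f(x)|\ \le\ \frac{s(f)}{n}.
\]

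To upgrade this to a strict inequality, set $e_0:=f-g$, so that $\inf_{h\in E}\|f-h\|_\infty=\inf_{h\in E}\|e_0-h\|_\infty$ and $e_0$ is supported on even-weight vertices; it is then enough to find $h\in E$ with small $\|h\|_\infty$ that decreases the peaks of $|e_0|$. For $h\in E$, the even-weight part $h|_{\{|x|\text{ even}\}}$ equals $\tfrac1n$ times the hypercube adjacency operator applied to $h|_{\{|y|\text{ odd}\}}$; this operator acts by $W_S\mapsto(n-2|S|)W_S$, so when $n$ is odd it is invertible and $h|_{\{|x|\text{ even}\}}$ can be made an arbitrary element of $\R^{2^{n-1}}$. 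Choosing $h\in E$ whose even-weight values agree in sign with $e_0$ at the (finitely many) maximizing vertices, one checks that for $s(f)>0$ and $\epsilon>0$ small, $\|e_0-\epsilon h\|_\infty<\|e_0\|_\infty\le s(f)/n$: at the maximizers $|e_0-\epsilon h|$ drops to $\|e_0\|_\infty-\epsilon$; at the remaining even-weight vertices $|e_0|$ is strictly below $\|e_0\|_\infty$ hence stays below for small $\epsilon$; and at odd-weight vertices $e_0-\epsilon h=-\epsilon h$ is below $s(f)/n$ for small $\epsilon$. (Alternatively this follows from the duality identity $\inf_{h\in E}\|f-h\|_\infty=\sup\{\langle f,\mu\rangle:\mu\in E^\perp,\ \|\mu\|_1\le1\}$ together with $\|\mu\|_1=\|\nu\|_1+\|P\nu\|_1$ for the unique even-supported $\nu$ with $\mu=(I-P)\nu$, $P$ the simple random walk operator.)

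The hard part is the case of even $n$: there $n-2|S|$ vanishes at $|S|=n/2$, the relevant adjacency block is singular, the peak-lowering argument breaks, and indeed $\inf_{h\in E}\|f-h\|_\infty=s(f)/n$ can occur for a non-constant $f$ (already for $n=2$, with $f$ equal to $+1$ at the all-zeros vertex, $-1$ at the all-ones vertex, and $0$ elsewhere). I expect this to be handled by replacing the even-weight vertex set with a different collection of $2^{n-1}$ ``Laplacian-type'' constraints whose operator is nonsingular, or by a suitable small perturbation of $E$; this — together with the trivial exception of the constant function ($s(f)=0$), so that the statement is read for non-constant $f$ — is the remaining delicate point.
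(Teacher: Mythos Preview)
Your core construction and argument are essentially the paper's: take $E$ to be the space of functions discrete-harmonic at one parity of vertices (you pick even weight, the paper picks odd weight --- a cosmetic difference), and approximate $f$ by the $g\in E$ that agrees with $f$ on the other parity; the error at each constrained vertex is $\tfrac1n|\Delta f(x)|\le s(f)/n$. That is exactly the paper's proof, and it yields only the non-strict bound $\inf_{g\in E}\|f-g\|_\infty\le s(f)/n$.

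Your additional work on upgrading $\le$ to $<$ goes beyond what the paper actually proves; the paper's argument stops at $\le$ (consistent with the $\le$ in the abstract), so the strict inequality in the theorem statement appears to be a slip rather than something the paper establishes. Your analysis of the strict case is correct: for odd $n$ the bipartite adjacency block is invertible and your peak-lowering argument works, while for even $n$ it can genuinely fail for this $E$ --- your $n=2$ example is valid, and the parity-flipped function $f(1,0)=1$, $f(0,1)=-1$, $f(0,0)=f(1,1)=0$ gives $\inf_{g\in E}\|f-g\|_\infty=1=s(f)/n$ for the paper's choice of $E$ as well. You are also right that strict inequality is impossible for constant $f$. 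So the gap you flag is real, but it is a gap in the theorem statement/paper's proof rather than in your argument, which matches the paper for the $\le$ version and correctly diagnoses the obstruction to $<$.
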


\subsection{Approximation by high-degree polynomials}
Let us revisit the high-degree approximation. How well can we approximate a general function $f :\{0,1\}^{n} \to \mathbb{R}$ by a polynomial of degree $d=(1-\delta)n$ where $\delta \in [0, 1]$ is fixed? In other words, what is the smallest mulitplier $J(n,d)$ that makes Jackson-type inequality 
$$
E_{d}^{n}(f) \le J(n,d) s(f)
$$ 
hold true in this regime? Pisier's inequlity \eqref{eq: Wagner}  with Theorem~\ref{lb-thm} show that if $\delta \in (1/2, 1]$ then
$$
c_1 \le J(n,d) \le 1
$$
where $c_1=c_{1}(\delta)>0$ depends only on $\delta$.
At the opposite end of the spectrum, setting $\d=0$ makes $J(n,d)=0$, since the function $f$ itself can be expressed as a polynomial of degree $d=n$. This makes us wonder whether and how $J(n,d)$ decreases to zero if we let $\d$ decrease to $0$. For example, is it true that $J(n, 3n/4)\to 0$ as $n \to \infty$?  The following result gives a bound of this kind. For $x = (x_{1}, \ldots, x_{n})$ and $y  = (y_{1}, \ldots, y_{n})$ in $\{0,1\}^{n}$ set
\begin{align*}
x\oplus y = (x_{1}+y_{1}\;  \mathrm{mod}(2), \ldots, x_{n}+y_{n} \; \mathrm{mod}(2)).
\end{align*}
 Pick any polynomial $h$ of degree at most $d$ on the real line, which satisfies $\mathbb{E} h(X)=1$ where $X \sim \mathrm{Bin}(n,1/2)$, and let $H(x_{1}, \ldots, x_{n}) = h(x_{1}+\cdots+x_{n})$. 
\begin{theorem}\label{mtavarit}
We have 
\begin{align}\label{mtavaritt}
E_{d}^{n}(f)\leq \max_{x \in \{0,1\}^{n}}| f(x) - \mathbb{E}_{y} f(y)H(y\oplus x)| \leq 3\frac{s(f)}{n} \mathbb{E}X|h(X)|,
\end{align}
where $y \sim \mathrm{Unif}(\{0,1\}^n)$, and $X \sim \mathrm{Bin}(n,1/2)$. 
\end{theorem}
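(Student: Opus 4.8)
The plan is to take the explicit approximant $\tilde f(x):=\mathbb{E}_y f(y)H(y\oplus x)$ appearing in the middle of \eqref{mtavaritt}, first check that it has degree at most $d$, and then estimate $\|f-\tilde f\|_\infty$ by telescoping $f(x)-f(x\oplus z)$ along random chains, which converts the sensitivity of $f$ into the weighted sum $\mathbb{E}\,X|h(X)|$. For the degree bound: expanding $h$ in the Krawtchouk basis, $h=\sum_{j=0}^d c_j K_j$ with $K_j$ of degree $j$, and using $\sum_{|S|=j}W_S(w)=K_j(|w|)$, gives $H(w)=\sum_{|S|\le d}c_{|S|}W_S(w)$, so $\widehat H(S)=0$ for $|S|>d$ and $\widehat H(\emptyset)=\mathbb{E}_zH(z)=\mathbb{E}\,h(X)=1$ by hypothesis. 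Since $\widehat{\tilde f}(S)=\widehat H(S)\widehat f(S)$, we get $\deg\tilde f\le d$, hence $E_d^n(f)\le\|f-\tilde f\|_\infty=\max_x|f(x)-\tilde f(x)|$, the first inequality in \eqref{mtavaritt}.

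For the second inequality, the substitution $z=y\oplus x$ together with $\mathbb{E}_zH(z)=1$ yields the pointwise identity $f(x)-\tilde f(x)=\mathbb{E}_z\,H(z)\bigl(f(x)-f(x\oplus z)\bigr)$, and I would split the expectation according to whether $|z|\le \tfrac45 n$ or $|z|>\tfrac45 n$. For the low-weight part, generate $z$ by choosing a uniformly random maximal chain $\emptyset=C_0\subsetneq C_1\subsetneq\cdots\subsetneq C_n=[n]$ and an independent $K\sim\mathrm{Bin}(n,1/2)$ and letting $z$ be the indicator vector $\mathbf 1_{C_K}$ (so $z$ is uniform with $|z|=K$). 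Telescoping $f(x)-f(x\oplus z)=\sum_{l=1}^{K}\bigl(f(x\oplus\mathbf 1_{C_{l-1}})-f(x\oplus\mathbf 1_{C_l})\bigr)$ and collecting the coefficient of each increment, the low-weight part equals $\mathbb{E}\sum_{l\ge 1}\beta(l)\bigl(f(x\oplus\mathbf 1_{C_{l-1}})-f(x\oplus\mathbf 1_{C_l})\bigr)$ with $\beta(l)=\sum_{l\le k\le 4n/5}h(k)\binom nk 2^{-n}$. Because $\mathbf 1_{C_{l-1}}$ is a uniformly random weight-$(l-1)$ vector and $\mathbf 1_{C_l}$ differs from it in one uniformly random coordinate among its $n-l+1$ zero coordinates, averaging the defining bound $\sum_j|f(u)-f(u^j)|\le s(f)$ gives $\mathbb{E}\,|f(x\oplus\mathbf 1_{C_{l-1}})-f(x\oplus\mathbf 1_{C_l})|\le s(f)/(n-l+1)$; summing and interchanging the order of summation bounds the low-weight part by $s(f)\sum_{k\le 4n/5}|h(k)|\binom nk 2^{-n}(H_n-H_{n-k})$, with $H_m=\sum_{j=1}^m 1/j$.

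For the high-weight part I would re-center at the antipode, writing $f(x)-f(x\oplus z)=\bigl(f(x)-f(x\oplus\mathbf 1_{[n]})\bigr)+\bigl(f(x\oplus\mathbf 1_{[n]})-f(x\oplus z)\bigr)$. The first bracket has modulus $\le 2s(f)$ since Wagner's inequality \eqref{eq: Wagner} gives $\inf_c\|f-c\|_\infty\le s(f)$, hence $\max f-\min f\le 2s(f)$. For the second bracket I would run the same chain argument walking \emph{down} from $\mathbf 1_{[n]}$ to $z$ (generating $z$ from a random decreasing maximal chain and an independent $K\sim\mathrm{Bin}(n,1/2)$); this path visits only the weight levels $n,n-1,\dots,|z|$, so its contribution is $\le s(f)\sum_{k>4n/5}|h(k)|\binom nk 2^{-n}(H_n-H_k)$. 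Combining, $\|f-\tilde f\|_\infty\le s(f)\bigl[\sum_{k\le 4n/5}|h(k)|\binom nk 2^{-n}(H_n-H_{n-k})+\sum_{k>4n/5}|h(k)|\binom nk 2^{-n}(2+H_n-H_k)\bigr]$. Finally one checks the elementary estimates $H_n-H_{n-k}\le\ln\tfrac{n}{n-k}\le\tfrac{3k}{n}$ for $1\le k\le\tfrac45 n$ and $2+H_n-H_k\le 2+\ln\tfrac nk\le\tfrac{3k}{n}$ for $\tfrac45 n<k\le n$ (each reduces, after the integral comparison $\sum_{a<j\le b}1/j\le\ln(b/a)$, to a one-variable inequality in $t=k/n$ handled by a short calculus argument); this collapses the bracket into $\tfrac{3}{n}\sum_{k}k|h(k)|\binom nk 2^{-n}=\tfrac{3}{n}\,\mathbb{E}\bigl[X|h(X)|\bigr]$, as required.

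The step I expect to be the main obstacle is the high-weight regime: telescoping directly from $\mathbf 0$ to a $z$ of weight close to $n$ drags through the thin top levels and loses a factor of order $H_n\sim\log n$ instead of $O(1)$, and this is repaired only by re-centering at the antipode and paying the constant $2s(f)$ afforded by Wagner's inequality. The numerical threshold $\tfrac45 n$ is chosen exactly so that both halves of the harmonic-number estimate succeed at once with the constant $3$; a cruder split would overshoot either in the middle range or near the top.
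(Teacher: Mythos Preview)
Your argument is correct and recovers the constant $3$. The overall architecture matches the paper's: both proofs telescope $f(x)-f(x\oplus z)$ along monotone chains, split according to the Hamming weight of $z$, and in the heavy regime re-center at the antipode via Wagner's bound. The execution, however, is genuinely different. The paper fixes a level $k$ and proves the pointwise lemma
\[
\sum_{|y|=k}|F(0)-F(y)|\le 3\,s(F)\binom{n-1}{k-1}
\]
by an exact combinatorial identity: after summing over all monotone $0$--$y$ paths, the edge count rearranges into $\sum_{|x|\le k-1}p_{|x|}\,s(F)(x)$ with explicit alternating weights $p_\ell$, and then one checks $\sum_\ell |p_\ell|\binom{n}{\ell}\le 2\binom{n-1}{k-1}$ for $k\le n/2$. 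You instead average over a single random maximal chain, bound each increment by $s(f)/(n-l+1)$ (or $s(f)/m$ on the way down), and reduce everything to the harmonic-number inequalities $-\ln(1-t)\le 3t$ on $[0,4/5]$ and $2-\ln t\le 3t$ on $(4/5,1]$.

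What each approach buys: the paper's identity is sharper at each fixed level (constant $2$ for $k\le n/2$) and produces a clean edge-counting picture; your probabilistic averaging is more elementary---no closed form for $p_\ell$, no case analysis of the alternating signs---and the endgame is two lines of calculus. One minor difference worth noting: the paper uses Wagner's exact form $|F(\mathbf 0)-F(\mathbf 1)|\le s(F)$, while you use the cruder oscillation bound $2s(f)$; this forces your threshold up to $4n/5$ rather than $n/2$, but the final constant is still $3$. Your intermediate passage through $\beta(l)$ is slightly misleading (you do not actually need $|\beta(l)|$, only the coarser bound obtained by pulling $|h(K)|$ out before decoupling $K$ from the chain), but the stated final bound $s(f)\sum_k|h(k)|\binom nk2^{-n}(H_n-H_{n-k})$ is correct either way.
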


Combining (\ref{mtavaritt}) with a {\em quadrature argument} in Section~\ref{kuadratura1}, we obtain

\begin{corollary}\label{k-bound}
For any $f :\{0,1\}^{n} \to \mathbb{R}$, and any $d \in [0,n]$, we have 
\begin{align}\label{kravup}
E_{d}^{n}(f) \leq  3 \frac{k_{n, \lfloor d/2 \rfloor +1}}{n} s(f) 
\end{align}
where $k_{n,\ell}$ is the smallest positive root of degree $\ell$ Kravchuk polynomial, i.e., polynomials orthogonal with respect to the measure  $\mathrm{Bin}(n, 1/2)$.
\end{corollary}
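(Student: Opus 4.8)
The plan is to apply Theorem~\ref{mtavarit} with a univariate polynomial $h$ built from the zeros of the Kravchuk polynomial via Gauss quadrature. Fix $d\in[0,n]$ and set $\ell=\lfloor d/2\rfloor+1$; throughout let $X\sim\mathrm{Bin}(n,1/2)$. By Theorem~\ref{mtavarit} it suffices to exhibit a polynomial $h$ with $\deg h\le d$, $\mathbb{E}\,h(X)=1$, and $\mathbb{E}\,X|h(X)|\le k_{n,\ell}$: the bound \eqref{mtavaritt} then gives $E_d^n(f)\le 3\frac{s(f)}{n}\,\mathbb{E}\,X|h(X)|\le 3\frac{k_{n,\ell}}{n}\,s(f)$, which is \eqref{kravup}.

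First I would collect the classical facts about the degree-$\ell$ Kravchuk polynomial $K_\ell$, i.e.\ the orthogonal polynomial of $\mathrm{Bin}(n,1/2)$. Since $d\le n$ forces $\ell\le\lfloor n/2\rfloor+1\le n$, the polynomial $K_\ell$ is well defined and has $\ell$ simple real zeros $t_1<t_2<\cdots<t_\ell$, all lying in the interior of the convex hull $[0,n]$ of the support $\{0,1,\dots,n\}$; in particular $t_1>0$, so $t_1=k_{n,\ell}$. Moreover the $\ell$-node Gauss rule at $t_1,\dots,t_\ell$ has positive Christoffel weights $w_1,\dots,w_\ell$ and is exact on polynomials of degree at most $2\ell-1$: $\mathbb{E}\,r(X)=\sum_{j=1}^{\ell}w_j\,r(t_j)$ whenever $\deg r\le 2\ell-1$.

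Next I would take $q(t)=\prod_{j=2}^{\ell}(t-t_j)^2\ge 0$, a polynomial of degree $2(\ell-1)=2\lfloor d/2\rfloor\le d$ that vanishes at $t_2,\dots,t_\ell$ but not at $t_1$. Applying the quadrature identity to $q$ (degree $2\ell-2$) and to $t\mapsto t\,q(t)$ (degree $2\ell-1$) collapses the sums onto the single node $t_1$ and gives $\mathbb{E}\,q(X)=w_1\,q(t_1)$ and $\mathbb{E}\,X\,q(X)=w_1\,t_1\,q(t_1)$, with $w_1\,q(t_1)>0$. Setting $h=q/\mathbb{E}\,q(X)$ produces a polynomial of degree at most $d$ with $h\ge 0$, $\mathbb{E}\,h(X)=1$, and $\mathbb{E}\,X|h(X)|=\mathbb{E}\,X\,h(X)=\mathbb{E}\,X\,q(X)/\mathbb{E}\,q(X)=t_1=k_{n,\ell}$, which is precisely the estimate demanded in the reduction. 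This completes the argument.

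The real content sits in the second paragraph --- the standard theory of the discrete orthogonal polynomials of $\mathrm{Bin}(n,1/2)$ and of Gauss quadrature: existence of $K_\ell$ for $\ell\le n$, simplicity and location of its zeros in $(0,n)$, positivity of the Christoffel numbers, and exactness of the $\ell$-node rule up to degree $2\ell-1$ (equivalently, that the spectrum of the truncated Jacobi matrix of $\mathrm{Bin}(n,1/2)$ equals the zeros of $K_\ell$). These are classical, so I do not anticipate a genuine obstacle, only bookkeeping: the degree count $2\lfloor d/2\rfloor\le d$, and the degenerate case $d=0$, where $\ell=1$, $q\equiv 1$, and \eqref{kravup} specializes to $E_0^n(f)\le\tfrac{3}{2}\,s(f)$ in agreement with $k_{n,1}=n/2$ --- weaker than, but consistent with, Pisier's inequality \eqref{eq: Wagner}.
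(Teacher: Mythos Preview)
Your proof is correct and follows essentially the same route as the paper: apply Theorem~\ref{mtavarit} with $h$ taken to be the normalized square of the polynomial vanishing at all Kravchuk zeros except the smallest, and evaluate via Gauss quadrature at those zeros. The only cosmetic difference is that the paper derives the quadrature rule from a moment-curve/convex-hull argument (Lemma~\ref{lemma-int} and Corollary~\ref{corollary-int}) rather than citing the classical Gauss--Christoffel theory directly, and it also records the matching lower bound $\inf_{\deg Q\le p,\;\mathbb{E}Q^2=1}\mathbb{E}\,XQ^2(X)\ge t_1$, which is not needed for the corollary itself.
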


Alternatively, combining \eqref{mtavaritt} with a specific choice of $h$ improves the bound (\ref{kravup}) in the regime $\delta \leq \max\{\delta^{2}, n^{-2/3}\}$, where $\delta = 1-d/n$.  

\begin{corollary} \label{pr-bound}
For any $f :\{0,1\}^{n} \to \mathbb{R}$, and any $d$, $0 \leq d \leq n$, we have 
\begin{align*}
E_{d}^{n}(f) \leq  3 \left(1-\frac{d}{n} \right) s(f).
\end{align*}
\end{corollary}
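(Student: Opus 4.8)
The plan is to derive Corollary~\ref{pr-bound} from Theorem~\ref{mtavarit} by exhibiting one explicit admissible polynomial $h$ and estimating the right-hand side $3\frac{s(f)}{n}\mathbb{E}X|h(X)|$. Since we are aiming for the clean bound $3(1-d/n)s(f)$, the target quantity is $\frac{1}{n}\mathbb{E}X|h(X)| \le 1 - d/n = \delta$, i.e. $\mathbb{E}X|h(X)| \le n\delta$. I would look for an $h$ of degree at most $d$ that is nonnegative on the relevant range $\{0,1,\dots,n\}$, so that $|h(X)| = h(X)$ and $\mathbb{E}X|h(X)| = \mathbb{E}Xh(X)$ becomes computable via the known first moments of $\mathrm{Bin}(n,1/2)$.

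The natural candidate is a product of linear factors that vanish at the largest values of the binomial support: take $h(t) = \prod_{j=d+1}^{n}\frac{t - j}{ \mathbb{E}X - j}$ (or a suitably normalized analogue), which has degree exactly $n - d = n\delta$ and is designed so that $\mathbb{E}h(X) = 1$ after normalization is arranged; alternatively one can use the falling-factorial polynomial $h(t) = c\,(t)(t-1)\cdots$ shifted appropriately. The key structural point is that a polynomial of the form $h(t) = c \prod_{j=0}^{n\delta-1}(t-a_j)$ with all roots $a_j$ placed at or near the top of the support $\{n-n\delta+1,\dots,n\}$ is nonnegative throughout $\{0,\dots,n\}$ (each factor $t-a_j$ is negative there, and there is an even... — more carefully, one chooses the sign of $c$ so that $h\ge0$ on the support). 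I would then compute $\mathbb{E}h(X)$ explicitly to fix the normalizing constant $c$, and compute $\mathbb{E}Xh(X)$; both are finite linear combinations of factorial moments of $\mathrm{Bin}(n,1/2)$, which are $\mathbb{E}(X)_k = \binom{n}{k}k!\,2^{-k} = n(n-1)\cdots(n-k+1)2^{-k}$. The ratio $\mathbb{E}Xh(X)/\mathbb{E}h(X)$ should then telescope or simplify to something bounded by $n\delta$ up to the constant $3$ already present in Theorem~\ref{mtavarit}.

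The main obstacle will be choosing the roots of $h$ so that simultaneously (i) $\deg h \le d$, (ii) $h \ge 0$ on $\{0,1,\dots,n\}$ so that the absolute value inside $\mathbb{E}X|h(X)|$ can be dropped, (iii) $\mathbb{E}h(X)=1$, and (iv) the resulting $\mathbb{E}Xh(X)$ is small — roughly of order $n\delta$. Placing roots exactly at integer points of the support makes $h$ vanish there, which kills part of the mass and can make $\mathbb{E}h(X)$ awkward to control from below; placing roots just outside the support (e.g. at half-integers $n - n\delta + 1/2, \dots$) keeps $h$ strictly positive and makes the moment computation cleaner, at the cost of slightly messier constants. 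I expect that after optimizing, the factor $3$ absorbs all the slack, and the clean statement $E_d^n(f)\le 3(1-d/n)s(f)$ falls out; verifying the moment inequality $\mathbb{E}Xh(X) \le n\delta\,\mathbb{E}h(X)$ for the chosen $h$ is the one genuinely computational step, and getting the root placement right so that this holds is where the real care is needed.

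Finally, I would record the comparison with Corollary~\ref{k-bound}: the bound here is $3\delta s(f)$ while \eqref{kravup} gives $3\frac{k_{n,\lfloor d/2\rfloor+1}}{n}s(f)$, and since the smallest root of the relevant Kravchuk polynomial is of order $\frac{n}{2} - c\sqrt{n\ell}$ when $\ell = n\delta/2$ is small, one has $k_{n,\ell}/n \approx \frac12 - c\sqrt{\delta/2}$, which is $\gg \delta$ precisely in the regime $\delta \lesssim \max\{\delta^2, n^{-2/3}\}$ claimed in the text; so Corollary~\ref{pr-bound} is the better estimate exactly there. This comparison needs only the standard asymptotics for the extreme zeros of Kravchuk polynomials, which I would cite rather than reprove.
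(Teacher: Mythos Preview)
Your overall strategy is exactly the paper's: apply Theorem~\ref{mtavarit} and exhibit a single nonnegative polynomial $h$ of degree at most $d$ with $\mathbb{E}h(X)=1$ and $\mathbb{E}X|h(X)|\le n-d$. But your specific candidate is mis-sized. The polynomial $\prod_{j=d+1}^{n}(t-j)$ has $n-d$ factors and hence degree $n-d$, so it violates your own constraint~(i) $\deg h\le d$ whenever $d<n/2$. The fix is to swap the two quantities: use all $d$ available degrees to kill the \emph{top $d$} integers of the support. Take
\[
\tilde h(t)=\prod_{k=n-d+1}^{n}(k-t),\qquad h=\tilde h/\mathbb{E}\tilde h(X).
\]
This has degree exactly $d$; each factor $k-t$ is strictly positive for $t\in\{0,\dots,n-d\}$ and one factor vanishes for each $t\in\{n-d+1,\dots,n\}$, so $\tilde h\ge 0$ on $\{0,\dots,n\}$ and $h$ is well-defined with $\mathbb{E}h(X)=1$.

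With this choice no moment computation is needed at all. Since $h$ vanishes on $\{n-d+1,\dots,n\}$, the random variable $Xh(X)$ is supported on $\{0,\dots,n-d\}$, and there $X\le n-d$ pointwise; hence
\[
\mathbb{E}X|h(X)|=\mathbb{E}Xh(X)\le (n-d)\,\mathbb{E}h(X)=n-d.
\]
Plugging into Theorem~\ref{mtavarit} gives $E_d^n(f)\le 3\frac{n-d}{n}s(f)$ directly. The apparatus you set up (half-integer roots to keep $h$ strictly positive, factorial-moment identities, telescoping ratios) is unnecessary; vanishing at the integer points is a feature, not a bug, because it is precisely what lets you replace $X$ by the pointwise bound $n-d$ on the remaining support.
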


Corollaries~\ref{k-bound} and \ref{pr-bound}, together with additional results on the smallest positive roots of Kravchuk polynomials, provide us with the following bound.

\begin{corollary}[Approximation by high-degree polynomials] \label{genupper}
For any $f :\{0,1\}^{n} \to \mathbb{R}$, and any $d \in [0,n]$ we have 
\begin{align}\label{upperbb1}
E_{d}^{n}(f) \leq  
C \min\{\delta,  \max\{\delta^2, n^{-2/3}\}\} s(f),
\end{align}
where $C>0$ is an absolute constant, and $\delta=1-d/n$.
\end{corollary}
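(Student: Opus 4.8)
The plan is to obtain \eqref{upperbb1} by playing Corollary~\ref{pr-bound} against Corollary~\ref{k-bound}: the former supplies the factor $3\delta$, which is best when $\delta$ is tiny, while the latter supplies $3k_{n,\ell}/n$ with $\ell=\lfloor d/2\rfloor+1$, and the whole point is that this second factor is $O\!\big(\max\{\delta^{2},n^{-2/3}\}\big)$. Once both bounds are in hand one keeps the smaller of the two and cleans up the constants using the elementary inequality $\min\{a,\lambda b\}\le\max\{1,\lambda\}\min\{a,b\}$, valid for all $a,b,\lambda>0$. Thus the entire content reduces to an upper bound on $k_{n,\ell}$, the smallest positive root of the degree-$\ell$ Kravchuk polynomial for $\mathrm{Bin}(n,1/2)$; concretely, it suffices to prove
\[
k_{n,\ell}\ \le\ C_0\Big(\frac{(n-2\ell)^{2}}{n}+n^{1/3}\Big)\qquad\text{for every integer }\ \ell<n/2 ,
\]
with $C_0$ an absolute constant.

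To see this is enough, note that $2(\ell-1)\le d<2\ell$ forces $\delta n-2\le n-2\ell<\delta n$, hence $(n-2\ell)^{2}/n\le\delta^{2}n$. If $\delta\ge n^{-2/3}$, then (for $n$ beyond an absolute threshold) $\delta n\ge4$, so $n-2\ell\ge\delta n/2>0$, i.e.\ $\ell<n/2$, and the Kravchuk bound applies: Corollary~\ref{k-bound} gives
\[
E_{d}^{n}(f)\ \le\ \frac{3k_{n,\ell}}{n}\,s(f)\ \le\ 3C_0\big(\delta^{2}+n^{-2/3}\big)s(f)\ \le\ 6C_0\max\{\delta^{2},n^{-2/3}\}\,s(f),
\]
which equals $6C_0\min\{\delta,\max\{\delta^{2},n^{-2/3}\}\}s(f)$ because $\delta\ge n^{-2/3}$ forces $\max\{\delta^{2},n^{-2/3}\}\le\delta$. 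In the complementary range $\delta<n^{-2/3}$ one has $\min\{\delta,\max\{\delta^{2},n^{-2/3}\}\}=\delta$, and Corollary~\ref{pr-bound} already yields $E_{d}^{n}(f)\le3\delta\,s(f)$. The finitely many small $n$ below the threshold are absorbed using $E_{d}^{n}(f)\le s(f)$, which follows from \eqref{eq: Wagner} together with the monotonicity of $E_{d}^{n}$ in $d$, at the expense of enlarging the overall constant.

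For the displayed Kravchuk estimate I would use the variational description of the smallest zero as the least eigenvalue of the symmetric $\ell\times\ell$ Jacobi matrix of $\mathrm{Bin}(n,1/2)$, which for this measure has constant diagonal $n/2$ and off-diagonal entries $\beta_{j}=\tfrac12\sqrt{(j+1)(n-j)}$, $j=0,\dots,\ell-2$. Flipping signs $v_{j}\mapsto(-1)^{j}v_{j}$ turns this into $k_{n,\ell}=\tfrac n2-\lambda_{\max}(A_{\ell})$ with $A_{\ell}$ the tridiagonal matrix with zero diagonal and off-diagonals $\beta_{j}$, so one needs a lower bound on $\lambda_{\max}(A_{\ell})$. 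Testing against a nonnegative smooth bump of width $m\sim(n/\delta)^{1/3}$ placed just left of the right end $j=\ell-1$, where $\beta_{j}$ is largest (as $\ell-1<n/2$) and varies at rate $\approx\delta/2$, gives $\lambda_{\max}(A_{\ell})\ge 2\beta_{\ell-1}-O\!\big(\tfrac{n}{m^{2}}+\delta m\big)=\sqrt{\ell(n-\ell)}-O(n^{1/3})$; combined with $\tfrac n2-\sqrt{\ell(n-\ell)}=\tfrac n2-\sqrt{(n/2)^{2}-((n-2\ell)/2)^{2}}\le\frac{(n-2\ell)^{2}}{2n}$ this is exactly the claim. (Alternatively one could quote existing sharp estimates for the extreme zeros of Kravchuk polynomials.) The main obstacle is this trial-vector analysis near the truncation edge and checking its uniformity in $(n,\ell)$: the bump argument is clean when $n^{-2/3}\le\delta\le1/2$, where $\ell\ge n/4$ and $n-\ell\ge n/2$ leave ample room, but the regime $\delta>1/2$ (small $\ell$, no room for a window) must be handled separately by the crude bound $k_{n,\ell}\le n\le 4n\delta^{2}$, and in each case one rechecks that the outcome stays below $C_0\,n\max\{\delta^{2},n^{-2/3}\}$.
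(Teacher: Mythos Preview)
Your overall architecture is exactly that of the paper: combine Corollary~\ref{pr-bound} (yielding the $3\delta$ bound) with Corollary~\ref{k-bound} (yielding $3k_{n,\ell}/n$), and then control the smallest Kravchuk root so that $k_{n,\ell}/n\lesssim \delta^{2}+n^{-2/3}$. Your case split ($\delta\ge n^{-2/3}$ versus $\delta<n^{-2/3}$, with small~$n$ and $\delta>1/2$ handled by Wagner's bound and a crude estimate) is also in the same spirit and is carried out correctly.

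The genuine difference is in how you bound $k_{n,\ell}$. The paper does not argue directly on the Jacobi matrix; instead it quotes two classical results. First, an inequality of Area--Dimitrov--Godoy--Paschoa \cite{ADGP13} gives $k_{n,p}\le \tfrac{n}{2}-\sqrt{\tfrac{n-p+1}{2}}\,h_{p,1}$, where $h_{p,1}$ is the largest zero of the Hermite polynomial of degree~$p$. Second, Szeg{\H o}'s asymptotic \cite[eq.~(6.32.5)]{S75} for $h_{p,1}$ supplies the Airy correction $h_{p,1}=(2p+1)^{1/2}-c\,(2p+1)^{-1/6}+o(1)$. Substituting and simplifying yields $k_{n,p}/n\le \tfrac12\delta^{2}+O((1-\delta)^{-1/6}n^{-2/3})$, which, after taking the minimum with $3\delta$ and absorbing the mild $(1-\delta)^{-1/6}$ factor into the constant (it is bounded for $\delta\le 1/2$, and for $\delta>1/2$ the $\delta$-branch dominates), gives \eqref{upperbb1}. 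Your Jacobi-matrix/trial-vector route is more self-contained and recovers the same leading behaviour $k_{n,\ell}\le \tfrac{n}{2}-\sqrt{\ell(n-\ell)}+O(n^{1/3})$; the price is the edge analysis you flagged, which the paper avoids by outsourcing to the Hermite literature. Either approach is valid, and your parenthetical remark that one could ``quote existing sharp estimates for the extreme zeros of Kravchuk polynomials'' is precisely what the paper does.
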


\subsection{Approximation of random Boolean functions}

Our results so far have been about {\em arbitrary} real-valued functions on the hypercube. Next we consider the same approximation problem for a {\em random} Boolean function. We say that $f : \{0,1\}^{n} \to 
 \{-1,1\}$ is a random Boolean function if  $\{f(x)\}_{x \in\{0,1\}^{n}}$ are independent identically distributed symmetric $\pm 1$ Bernoulli random variables. The following result is due to \cite{OS08} which shows that $f_{\leq d}$, where $d$ is slighly greater than $n/2$, is a good approximation in $L^{\infty}$ to $f$ with high probability. Set 
\begin{align*}
    f_{>d}(x) := \sum_{|S|> d} \widehat{f}(S) W_{S}(x),
\end{align*}
and denote $f_{\leq d} :=f-f_{>d}$. It is clear that $\inf_{\mathrm{deg}(g)\leq d}\|f-g\|_{2}$ is achieved on $g=f_{\leq d}$.

\begin{theorem}[Approximability of random Boolean functions by polynomials of degree $>n/2$] \label{rocco}
If $d \ge n/2 + C\sqrt{n \log n}$, then 
a random Boolean function $f:\{0,1\}^n \to \{-1,1\}$ can be uniformly $1/n^{10}$-approximated by a polynomial of degree $d$. Specifically, we have
\begin{align}\label{ryan1}
\| f_{>d} \|_\infty \le \frac{1}{n^{10}}
\end{align}
with probability at least $1-2^{-n}$.
\end{theorem}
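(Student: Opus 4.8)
The plan is to follow the approach of O'Donnell--Servedio \cite{OS08}: realize $f_{>d}$ as a Rademacher sum, concentrate it pointwise on the cube, and finish with a union bound. First I would rewrite $f_{>d}$ as a Fourier multiplier applied to $f$. Since $\widehat f(S)=\mathbb{E}_y f(y)W_S(y)$ with $y$ uniform on $\{0,1\}^n$,
\[
f_{>d}(x)=\sum_{|S|>d}\widehat f(S)W_S(x)=\mathbb{E}_y f(y)\sum_{|S|>d}W_S(x\oplus y)=2^{-n}\sum_{y\in\{0,1\}^n}K(x\oplus y)\,f(y),
\]
where $K(z):=\sum_{|S|>d}W_S(z)$. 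For each fixed $x$ this exhibits $f_{>d}(x)=\sum_y c_{x,y}f(y)$ as a linear combination of the i.i.d.\ symmetric signs $\{f(y)\}_{y}$, with coefficients $c_{x,y}=2^{-n}K(x\oplus y)$.

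The key computation is that the $\ell^2$ mass of these coefficients is exactly a binomial tail. Using $\sum_{z\in\{0,1\}^n}W_U(z)=2^n$ if $U=\emptyset$ and $0$ otherwise, one gets $\sum_z K(z)^2=\sum_{|S|,|T|>d}\sum_z W_{S\triangle T}(z)=2^n\binom{n}{>d}$, where $\binom{n}{>d}:=\sum_{j>d}\binom nj$. Hence
\[
\sum_y c_{x,y}^2=2^{-2n}\sum_z K(z)^2=2^{-n}\binom{n}{>d}=\Pr{\mathrm{Bin}(n,1/2)>d}.
\]
Under the hypothesis $d\ge n/2+C\sqrt{n\log n}$, the Hoeffding bound for the binomial upper tail gives $\sum_y c_{x,y}^2\le \exp\!\big(-2(d-n/2)^2/n\big)\le n^{-2C^2}$. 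Feeding this into Hoeffding's inequality for Rademacher sums yields, for each fixed $x$,
\[
\Pr{|f_{>d}(x)|>n^{-10}}\le 2\exp\!\Big(-\frac{n^{-20}}{2\sum_y c_{x,y}^2}\Big)\le 2\exp\!\Big(-\tfrac12\, n^{\,2C^2-20}\Big),
\]
and a union bound over the $2^n$ points $x$ gives $\Pr{\|f_{>d}\|_\infty>n^{-10}}\le 2^{\,n+1}\exp(-\tfrac12 n^{2C^2-20})$. Choosing the absolute constant $C$ large enough that $\tfrac12 n^{2C^2-20}\ge (2n+1)\log 2$ for all relevant $n$ (e.g.\ $C=4$, which makes the exponent $\tfrac12 n^{12}$), the right-hand side is at most $2^{-n}$. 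Since $\inf_{\deg g\le d}\|f-g\|_\infty\le\|f-f_{\le d}\|_\infty=\|f_{>d}\|_\infty$, this is the assertion.

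The one step that needs care is the last one: both the target accuracy $n^{-10}$ and the binomial tail $2^{-n}\binom{n}{>d}$ are only polynomially small in $n$, so the argument closes only if the exponent $2C^2-20$ can be pushed above $1$ by taking $C$ an absolute constant---which is precisely why the hypothesis demands a gap of order $\sqrt{n\log n}$ (rather than $O(\sqrt n)$) above $n/2$. Everything else is routine: the first step is a direct Fourier identity, the $\ell^2$ computation is orthogonality of characters, and the pointwise bound is textbook Hoeffding. Finally, for the small values of $n$ for which $n/2+C\sqrt{n\log n}\ge n$, the hypothesis forces $d\ge n$, hence $f_{>d}\equiv 0$ and the claim is trivial; so one may assume $n$ is as large as needed.
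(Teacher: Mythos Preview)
Your proof is correct and follows essentially the same approach as the paper's: express $f_{>d}(x)$ as a Rademacher sum in the values $\{f(y)\}_y$, compute the $\ell^2$ mass of the coefficients as $2^{-n}\binom{n}{>d}$ via orthogonality of characters, apply Hoeffding pointwise, and finish with a union bound over the $2^n$ cube points. The only cosmetic differences are that the paper writes the linear functional as $\langle f, v_\theta\rangle$ with $v_\theta(x)=2^{-n}\sum_{|S|>d}W_S(\theta)W_S(x)$ (which is your $c_{\theta,x}=2^{-n}K(x\oplus\theta)$) and phrases the concentration via the $\psi_2$ norm rather than Hoeffding's inequality directly; your version is slightly more explicit about the choice of $C$.
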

The original proof of Theorem~\ref{rocco} in \cite{OS08} did not use $f_{\leq d}$ as an approximating polynomial, however, their argument still implies the bound (\ref{ryan1}). In Subsection~\ref{roc01} we will present a direct and simpler proof of Theorem~\ref{rocco}.

It is an interesting and open porblem, Wang--Williams conjecture, see also \cite[Conjecture 8.2]{BV19}, what should be the correct term instead of $C\sqrt{n \log(n)}$ in Theorem~\ref{rocco}.
In general the degree $n/2$ is needed in Theorem~\ref{rocco}, i.e., half of the Boolean functions cannot be uniformly well  approximated by polynomials of degree at most $n/2$. This result is due to \cite{Anth}.  Since the proof is short we decided to present it in Subsection~\ref{cnobili} for completeness. 

\begin{theorem}[Degree $n/2$ is a sharp threshold]\label{thres1}
At least $50\%$ of all Boolean functions $f:\{0,1\}^n \to \{-1,1\}$ satisfy 
$$
\inf_{\deg(g) \le n/2 } \|f-g\|_\infty \ge 1.
$$
Equivalently, at least $50\%$ of all Boolean functions are not sign-representable by any PTF of degree $\leq n/2$, where $PTF$ stands for polynomial threshold function, i.e., sign of degree at most $d$ real-valued polynomial on the hypercube.
\end{theorem}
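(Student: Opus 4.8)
The plan is to count the number of sign patterns that degree-$\leq n/2$ polynomial threshold functions can realize on the hypercube and show this number is strictly less than $2^{2^n}$, so that a positive fraction of Boolean functions are not sign-representable by such PTFs. The key numerical coincidence is that when $n$ is even the dimension of the space of polynomials of degree at most $n/2$ is $\binom{n}{\leq n/2} = 2^{n-1} + \frac{1}{2}\binom{n}{n/2}$, which is just slightly more than half of $2^n$. I would first recall the classical fact (a consequence of the theory of arrangements of hyperplanes, or of the VC-dimension / Warren-type bound, or most cleanly of a dimension-counting argument such as the one of Anthony--Holden or the result cited as \cite{Anth}) that the number of distinct sign vectors $(\sgn p(x))_{x \in \{0,1\}^n}$, as $p$ ranges over real polynomials of degree at most $d$ that are nonvanishing on the cube, is at most $2 \sum_{j=0}^{D-1}\binom{2^n - 1}{j} \le 2 \cdot 2^{\,(2^n-1) H(D/2^n)}$ where $D = \binom{n}{\leq d}$ is the dimension of the polynomial space; but for the clean "at least $50\%$" statement one wants something sharper.

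The sharper route, which I expect is the one used, is a pairing/symmetry argument rather than a crude count. Observe that if $f$ is sign-represented by a polynomial $p$ of degree $\leq n/2$, consider the function $\tilde f(x) := f(\bar x)\cdot(-1)^{x_1+\cdots+x_n}$ (or a similar twist using the top-degree Fourier character), whose representing polynomial would need degree $> n/2$; the map $f \mapsto \tilde f$ is an involution on Boolean functions, and I would argue that $f$ and $\tilde f$ cannot both be degree-$\leq n/2$ sign-representable unless $f$ lies in a small exceptional set. More concretely: a degree-$\leq n/2$ PTF $\sgn p$ and its "dual" cannot coexist because the sign of $p$ on the cube, multiplied by the parity character $W_{[n]}$, forces a polynomial of degree $> n/2$ to agree in sign with $p\cdot W_{[n]}$, and one checks this is impossible for generic $f$ by a rank or orthogonality argument against the character $W_{[n]}$. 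Hence the set of degree-$\leq n/2$ sign-representable functions has size at most $\tfrac12 \cdot 2^{2^n}$ plus lower-order corrections, giving the claim.

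The main obstacle will be pinning down the exact "$50\%$" rather than "$(1-o(1))\cdot 100\%$" or "a positive fraction": a naive entropy count of sign patterns of a $D$-dimensional space with $D = 2^{n-1}+\tfrac12\binom{n}{n/2}$ gives only that the fraction is bounded away from $1$, not that it is at least $1/2$. Getting the sharp constant requires exploiting the precise combinatorial structure — either the exact Cover-type formula $2\sum_{j=0}^{D-1}\binom{N-1}{j}$ with $N = 2^n$ together with the identity $\sum_{j=0}^{N/2-1}\binom{N-1}{j} = 2^{N-2}$ (so that a $D$ just above $N/2$ still keeps the count below $2^{N-1}$), or the involution argument above which automatically yields a factor exactly $1/2$. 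I would therefore lead with the statement that polynomials of degree $\le n/2$ on $\{0,1\}^n$ span a space of dimension $D \le 2^{n-1} + \tfrac12\binom{n}{n/2}$, invoke the bound on the number of sign-vectors of a linear space of functions on $N = 2^n$ points, namely at most $2\sum_{j=0}^{D-1}\binom{N-1}{j}$, and then use $\sum_{j=0}^{N/2 - 1}\binom{N-1}{j} = 2^{N-2}$ to conclude that for $n$ large this count is at most $2^{N-1}$, i.e. at most half of all Boolean functions; the small-$n$ cases and the $\tfrac12\binom{n}{n/2}$ slack are handled by noting the character $W_{[n]}$ itself, and functions obtained from it, are never degree-$\le n/2$ sign-representable, absorbing the correction.
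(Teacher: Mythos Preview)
Your core idea --- bound the number of sign patterns of degree-$\le d$ polynomials on $\{0,1\}^n$ by the Schl\"afli/Cover count $2\binom{N-1}{\le D-1}$ with $N=2^n$, $D=\binom{n}{\le d}$, and note that when $D\le N/2$ this is at most $2\cdot 2^{N-2}=2^{N-1}$, i.e.\ half of all Boolean functions --- is exactly the paper's proof. The paper simply asserts $D\le N/2$ for $d\le n/2$ and stops; there is no involution and no separate handling of edge cases.

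You correctly spot that for even $n$ with $d=n/2$ one actually has $D=2^{n-1}+\tfrac12\binom{n}{n/2}>N/2$, so the inequality the paper uses fails there; the paper does not address this, and your proposed fixes do not close the gap either. Your final counting step goes the wrong way: once $D>N/2$, the Schl\"afli bound $2\binom{N-1}{\le D-1}$ \emph{exceeds} $2^{N-1}$, and the overshoot is of order $(D-N/2)\binom{N-1}{N/2}$, roughly $2^{N-1}\cdot 2^{n/2}/\sqrt{n}$, far too large to be ``absorbed'' by excluding a few functions near $W_{[n]}$. Your involution $f\mapsto f(\bar x)W_{[n]}(x)$ is also not justified: knowing $f=\sgn p$ with $\deg p\le n/2$ gives $\tilde f=\sgn q$ for one particular high-degree $q$, but says nothing about whether $\tilde f$ admits some \emph{other} low-degree sign representation. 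In short, your first paragraph already contains the paper's entire argument (valid for $n$ odd, or for $d<n/2$); the sharp $50\%$ at $d=n/2$ with $n$ even is left open by both you and the paper.
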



\section{Proofs}

\subsection {Proof of Proposition~\ref{sym-case}}
First we verify the upper bound (\ref{symup1}) in Proposition~\ref{sym-case}. Since $f$ is symmetric, the value of $f$ at any point $x = (x_1,\ldots,x_n) \in \{0,1\}^n$ is uniquely determined by the Hamming weight $x_1+\cdots+x_n$. In other words, for any symmetric function $f: \{0,1\}^n \to \R$ there exists a function $\phi: \{0,\ldots,n\} \to \R$ such that 
\begin{equation} \label{eq: f phi}
f(x_1,\ldots,x_n) = \phi (x_1+\cdots+x_n) 
\quad \text{for all } x \in \{0,1\}^n.
\end{equation}
Conversely, for any function $\phi: \{0,\ldots,n\} \to \R$, the function $f: \{0,1\}^n \to \R$ defined by \eqref{eq: f phi} is symmetric. This observation allows us to use symmetry to greatly simplify the computations of the approximation error and sensitivity.

\begin{lemma}[Approximation error for symmetric functions] \label{sym-lbd} 
    For any symmetric $f : \{0,1\}^{n} \to \mathbb{R}$ we have
    \begin{align*}
        \inf_{\mathrm{deg}(g)\leq d} \| f-g\|_{\infty} 
        = \inf_{ \substack{\mathrm{deg}(g)\leq d \\ g \text{ is symmetric}} } \|f-g\|_{\infty} 
        = \inf_{\mathrm{deg}(h)\leq d} \max_{k \in \{0,\ldots,n\}}| \phi(k)-h(k)|,
    \end{align*}
    where $\phi: \mathbb{R} \to \mathbb{R}$ is a function that satisfies \eqref{eq: f phi}.
\end{lemma}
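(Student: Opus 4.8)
The plan is to establish the two equalities by a symmetrization argument. The key tool is that averaging over the action of the symmetric group $\mathcal{S}_n$ on the coordinates of $\{0,1\}^n$ does not increase the sup-norm and does not increase the degree of a polynomial. Concretely, for any $g : \{0,1\}^n \to \mathbb{R}$ define its symmetrization
\[
\bar g(x) = \frac{1}{n!} \sum_{\pi \in \mathcal{S}_n} g(x_{\pi(1)}, \ldots, x_{\pi(n)}).
\]
Then $\bar g$ is symmetric, $\deg(\bar g) \le \deg(g)$ (averaging multilinear monomials of degree $\le d$ produces a multilinear polynomial of degree $\le d$), and $\|\bar g\|_\infty \le \|g\|_\infty$ by the triangle inequality. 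Moreover, since $f$ is symmetric, $f - g$ and $f - \bar g$ satisfy $\overline{f-g} = f - \bar g$, so $\|f - \bar g\|_\infty \le \|f - g\|_\infty$.

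For the first equality: the inequality $\inf_{\deg(g)\le d} \ge \inf_{\deg(g)\le d,\ g\text{ symm.}}$ would be wrong in general, so one argues the reverse — the infimum over all $g$ of degree $\le d$ is clearly $\le$ the infimum over the smaller symmetric class, and conversely, given any $g$ with $\deg(g) \le d$, the symmetrization $\bar g$ is in the symmetric class with $\deg(\bar g) \le d$ and $\|f - \bar g\|_\infty \le \|f-g\|_\infty$, so the symmetric infimum is $\le \|f-g\|_\infty$; taking the infimum over $g$ gives the matching inequality. Hence the two infima are equal.

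For the second equality: if $g$ is symmetric of degree $\le d$, then as in \eqref{eq: f phi} there is $\psi$ with $g(x) = \psi(x_1+\cdots+x_n)$ on $\{0,1\}^n$; one must check that $\psi$ can be taken to be (the restriction of) a polynomial of degree $\le d$ on $\mathbb{R}$. This is the one genuinely non-trivial point and is the step I expect to be the main obstacle: it requires knowing that a symmetric multilinear polynomial of degree $\le d$ restricted to $\{0,1\}^n$ agrees with a univariate polynomial of degree $\le d$ in the Hamming weight. This follows because symmetric multilinear polynomials of degree $\le d$ are spanned by the elementary symmetric polynomials $e_0, e_1, \ldots, e_d$, and on $\{0,1\}^n$ each $e_k(x)$ equals $\binom{x_1+\cdots+x_n}{k}$, which is a polynomial of degree $k \le d$ in the weight $k$-variable; conversely any univariate polynomial of degree $\le d$ is a linear combination of $\binom{t}{0}, \ldots, \binom{t}{d}$, hence arises this way. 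Thus the correspondence $g \leftrightarrow h$ between symmetric degree-$\le d$ functions on the cube and degree-$\le d$ univariate polynomials (on the integers $\{0,\ldots,n\}$) is onto in both directions, and under it $\|f - g\|_\infty = \max_{k \in \{0,\ldots,n\}} |\phi(k) - h(k)|$ by \eqref{eq: f phi}. Taking infima gives the second equality and completes the proof.
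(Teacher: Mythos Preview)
Your proof is correct and follows essentially the same approach as the paper: symmetrize the approximant via averaging over $\mathcal{S}_n$ for the first equality, and identify symmetric degree-$\le d$ functions on the cube with univariate degree-$\le d$ polynomials in the Hamming weight for the second. In fact, you supply more detail than the paper does on the second step (the paper simply asserts that any symmetric polynomial of degree $\le d$ on $\{0,1\}^n$ can be written as $h(x_1+\cdots+x_n)$ with $\deg h \le d$), whereas your argument via $e_k(x) = \binom{|x|}{k}$ makes this explicit.
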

\begin{proof}
To check the first equality, recall that by definition, the left hand-side term is less or equal than the right hand-side term. For the reverse direction, for any $g$, we can obtain a symmetric function with smaller error
\begin{align*}
\tilde{g}(x):=\frac{1}{n!}\sum_{\sigma \in S_n} g(x_{\sigma(1)}, \ldots, x_{\sigma(n)}),
\end{align*}
where $S_n$ is the symmetry group of $[n]$. Since
\begin{align*}
\| f(x) - \tilde{g}(x) \|_{\infty} \leq \frac{1}{n!}\sum_{\sigma \in S_n} \| f(x) - g(\sigma(x)) \|_{\infty} = \| f(x) - g(x)\|_{\infty},
\end{align*}
where the last equality follows from the fact that $f$ is symmetric and $f(x) = f(x_{\sigma(1)}, \ldots, x_{\sigma(n)})$.

For the right hand-side equality, we again notice that any symmetric polynomial $g$ on the hypercube $\{0,1\}^{n}$ of degree at most $d$ can be written as $h(x_{1}+\cdots+x_{n})$, where $h$ is a polynomial on the real line of degree at most $d$. 
\end{proof}

\begin{lemma}[Sensitivity of symmetric functions] \label{sym-fact}
    For any symmetric $f :\{0,1\}^{n}\to \mathbb{R}$ we have
    \begin{align*}
        \frac{n}{2} \max_{k \in [n]}|\phi(k) - \phi(k-1)| 
        \leq s(f)	
        \leq n \max_{k \in [n]}|\phi(k) - \phi(k-1)|,
    \end{align*}
    where $\phi: \mathbb{R} \to \mathbb{R}$ is a function that satisfies \eqref{eq: f phi}.
\end{lemma}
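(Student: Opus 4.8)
The plan is to first derive an exact formula for $s(f)$ in terms of the one-dimensional profile $\phi$, and then read off both inequalities from it. Fix a point $x \in \{0,1\}^n$ of Hamming weight $k = x_1 + \cdots + x_n$. Flipping a coordinate $j$ with $x_j = 0$ raises the weight to $k+1$ and contributes $|\phi(k+1) - \phi(k)|$ to the sensitivity sum at $x$, while flipping a coordinate with $x_j = 1$ lowers the weight to $k-1$ and contributes $|\phi(k) - \phi(k-1)|$. Since exactly $n-k$ coordinates of $x$ equal $0$ and exactly $k$ equal $1$, summing over $j$ gives
\begin{align*}
\sum_{j=1}^n |f(x) - f(x^j)| = (n-k)\,|\phi(k+1) - \phi(k)| + k\,|\phi(k) - \phi(k-1)|,
\end{align*}
and maximizing over $x$ (equivalently over $k \in \{0,\dots,n\}$) yields
\begin{align*}
s(f) = \max_{0 \le k \le n} \Big[ (n-k)\,|\phi(k+1) - \phi(k)| + k\,|\phi(k) - \phi(k-1)| \Big],
\end{align*}
where the coefficients $n-k$ and $k$ automatically suppress the nonexistent neighbors at $k=n$ and $k=0$.

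For the upper bound, set $M := \max_{k \in [n]} |\phi(k) - \phi(k-1)|$; each consecutive difference in the bracket is at most $M$, so the bracket is at most $(n-k)M + kM = nM$ for every $k$, hence $s(f) \le nM$.

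For the lower bound, choose $k^\star \in [n]$ with $|\phi(k^\star) - \phi(k^\star - 1)| = M$. Evaluating the bracket at $k = k^\star - 1$ shows $s(f) \ge (n - k^\star + 1) M$, and evaluating it at $k = k^\star$ shows $s(f) \ge k^\star M$. Taking the larger of the two bounds and using that $k^\star$ and $n - k^\star + 1$ are positive integers summing to $n+1$, we get $s(f) \ge \max\{k^\star,\, n - k^\star + 1\}\, M \ge \tfrac{n+1}{2} M \ge \tfrac{n}{2} M$. There is no genuinely hard step here; the only point requiring care is the bookkeeping at the boundary weights $k \in \{0, n\}$, which is precisely why $k^\star$ is taken in $[n] = \{1,\dots,n\}$ rather than in $\{0,\dots,n\}$: this guarantees that both candidate points, of Hamming weight $k^\star - 1$ and $k^\star$, are legitimate weights in $\{0,\dots,n\}$, so both lower bounds are valid.
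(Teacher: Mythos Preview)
Your proof is correct and follows essentially the same strategy as the paper's proof: both obtain the upper bound by bounding each local contribution by $M$, and both obtain the lower bound by evaluating the local sensitivity at two adjacent weights $k^\star$ and $k^\star-1$ (the paper phrases this as looking at a point $x$ with $|x|=m$ and a neighbor $x^j$), then taking the larger of $k^\star M$ and $(n-k^\star+1)M$. Your version is a bit more explicit in first writing down the exact formula for the local sensitivity, and you note the slightly sharper constant $(n+1)/2$ in place of $n/2$, but the underlying argument is the same.
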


\begin{proof}
The right hand-side inequality follows from the definition of $s(f)$. For the left hand-side inequality, let $m$ be the number that attains the maximum {\em jump}, i.e., 
\begin{align*}
|\phi(m) - \phi(m-1)|=\max_{k \in [n]}|\phi(k) - \phi(k-1)|.
\end{align*}
Pick a vector $x \in \{0,1\}^n$ with $|x|=m$. We have 
\begin{align*}
s(f)(x) = \sum_{i=1}^n |f(x) - f(x^i)| \geq  m \max_{k \in [n]}|\phi(k) - \phi(k-1)|.
\end{align*}
Similarly, we obtain that
\begin{align*}
s(f)(x^j) \geq (n-m+1) \max_{k \in [n]}|\phi(k) - \phi(k-1)|,
\end{align*}
where $j$ is a coordinate at which $x_{j}=1$. Thus 
\begin{align*}
 s(f) \geq \max(s(f)(x),s(f)(x^j))\geq \frac{n}{2} \max_{k \in [n]}|\phi(k) - \phi(k-1)|.
\end{align*}
\end{proof}

\begin{lemma}   \label{lem: ak}
For any real numbers $a_0,\ldots,a_n$, we have 
\begin{align*}
\inf_{\mathrm{deg}(h) \leq d } \max_{k \in \{0,\ldots,n\}} |a_k - h(k) | 
\leq \frac{Cn}{d} \max_{k \in [n]} |a_k - a_{k-1}|
\end{align*}
\end{lemma}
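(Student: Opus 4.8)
The plan is to reduce the discrete approximation problem on the grid $\{0,1,\ldots,n\}$ to a classical Jackson-type approximation statement on the interval $[0,n]$ (or, after rescaling, on $[0,1]$ or $[-1,1]$), where the quantity $\max_{k\in[n]}|a_k-a_{k-1}|$ plays the role of (a discrete surrogate for) the modulus of continuity or the Lipschitz norm of the interpolant.

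First I would produce a function $F:[0,n]\to\R$ that agrees with the data, i.e. $F(k)=a_k$ for $k=0,\ldots,n$, and is controlled in a suitable smoothness sense by $M:=\max_{k\in[n]}|a_k-a_{k-1}|$. The natural choice is the piecewise-linear interpolant of the points $(k,a_k)$; this $F$ is Lipschitz with constant exactly $M$ on $[0,n]$, hence after rescaling $t\mapsto nt$ it becomes a function on $[0,1]$ with Lipschitz constant $nM$. Then I would invoke the classical Jackson theorem for algebraic polynomial approximation on an interval: a Lipschitz function $G$ on $[0,1]$ can be approximated by a polynomial $p$ of degree $\le d$ with $\|G-p\|_{\infty}\le C\,\mathrm{Lip}(G)/d$ (this is the standard $E_d(G)\le C\,\omega(G,1/d)$ with $\omega(G,\delta)\le \mathrm{Lip}(G)\,\delta$). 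Pulling $p$ back to $[0,n]$ gives a polynomial $h$ of degree $\le d$ with $\max_{t\in[0,n]}|F(t)-h(t)|\le C n M/d$, and restricting to the grid points $t=k$ yields $\max_{k}|a_k-h(k)|\le CnM/d$, which is exactly the claimed bound.

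The one genuine subtlety is that Jackson's theorem in the cleanest form requires the degree $d$ to be a positive integer and the bound $C/d$ is only meaningful once $d\ge 1$; I should note that for $d=0$ (or more generally $d$ small compared to $1$) the trivial estimate with a constant polynomial $h\equiv a_0$ already gives $\max_k|a_k-h(k)|\le nM$, which is absorbed into $CnM/d$ after enlarging $C$, so the statement holds for all $d\in(0,n]$ as phrased. A second point to be careful about is the rescaling of the interval: passing from $[0,n]$ to $[-1,1]$ via an affine map does not change the degree of a polynomial and multiplies the Lipschitz constant by $n/2$, so the factor $n$ in the final bound comes out correctly and the constant $C$ is genuinely absolute (independent of $n$ and $d$).

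I expect the main obstacle to be purely expository rather than mathematical: deciding which version of the classical Jackson inequality to cite and making the rescaling bookkeeping clean, since the estimate $E_d(G)\le C\,\mathrm{Lip}(G)/d$ for algebraic polynomials on a compact interval is completely standard. An alternative, self-contained route avoiding any citation would be to exhibit $h$ explicitly, e.g. as a suitable partial sum or Cesàro/de la Vallée-Poussin-type mean of the Chebyshev expansion of $F$ (after mapping $[0,n]$ to $[-1,1]$), and to bound the error directly using $\mathrm{Lip}(F)\le M$; but for the purposes of this lemma citing the classical Jackson theorem is the most economical choice.
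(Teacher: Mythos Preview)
Your proposal is correct and follows essentially the same route as the paper: extend the discrete data to a continuous function on $[0,n]$ with controlled modulus of continuity, apply the classical Jackson theorem for algebraic polynomials, and restrict back to the grid. The only cosmetic difference is that the paper uses an interpolant that is constant on $[k-\tfrac13,k+\tfrac13]$ and linear in between (so that $\omega(v,\tfrac13)=\max_k|a_k-a_{k-1}|$ exactly, followed by the subadditivity bound $\omega(v,ta)\le(t+1)\omega(v,a)$), whereas your piecewise-linear interpolant with Lipschitz constant $M$ achieves the same bound slightly more directly.
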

\begin{proof}
For any $v : [a,b] \to \mathbb{R}$, let
\begin{align*}
w(v,t) = \sup_{x,y \in [a,b], |x-y| \leq t} |v(x)-v(y)|,
\end{align*}
be the modulus of continuity of $v$. 
It is straightforward to verify that $t \mapsto w(v,t)$ is non-decreasing, and  $t \mapsto w(v,t)$ is sub-additive, i.e., $w(v,t_1+t_2) \leq w(v,t_1) + w(v,t_2)$. Therefore, it follows that 
\begin{equation}    \label{eq: w scaling}
    w(v,ta)\leq (t+1) w(v,a) \quad \forall t>1, \forall a>0.
\end{equation}
Jackson's theorem for algebraic polynomials \cite{Jac12} states that $\forall v \in C([a,b])$,
\begin{align*}
\inf_{\mathrm{deg}(h) \leq d} \max_{x \in [a,b]} | v(x) - h(x) | \leq C w(v, \frac{b-a}{2d}).
\end{align*}
Take $[a,b]=[0,n]$. Set $v(t)$ to be constant $a_k$ on each interval $[k-\frac{1}{3}, k+ \frac{1}{3}]$ (if $k=0$ or $n$, then the interval would be $[0, \frac{1}{3}]$ and $[n-\frac{1}{3},n]$ respectively), and interpolate by a linear function on the remaining parts the domain so that $v(t)$ is continous.  Then, clearly 
\begin{align*}
w(v,\frac{1}{3})=\max_{k \in [n]}|a_k - a_{k-1}|.
\end{align*}
Thus, using \eqref{eq: w scaling} and the fact that $d \leq n$, we have 
\begin{align*}
w(v,\frac{n}{2d}) \leq \left( \frac{3n}{2d}+1\right) w(v,\frac{1}{3}) \leq \frac{3n}{d} \max_{k\in [n]}|a_k -a_{k-1}|.
\end{align*}
Then the lemma follows from 
\begin{align*}
\inf_{\mathrm{deg}(h)\leq d} \max_{k \in \{0,\ldots,n\}}|a_k - h(k)| \leq \inf_{\mathrm{deg}(h) \leq d} \max_{x \in [0,n]} |v(x) - h(x)| \leq \frac{Cn}{d} \max_{k \in [n]}|a_k - a_{k-1}|.
\end{align*}
\end{proof}

Now the upper bound in Proposition~\ref{sym-case} can be proved as follows:
we have 
\begin{align*}
    E_d^n 
    &= \inf_{\mathrm{deg}(g)\leq d} \|f-g\|_{\infty} \\
    &= \inf_{\mathrm{deg}(h)\leq d} \max_{k \in \{0,\ldots,n\}}|\phi(k)-h(k)|
        \quad \text{(by Lemma~\ref{sym-lbd})} \\
    &\le \frac{Cn}{d} \max_{k \in [n]} |\phi(k) - \phi(k-1)|
        \quad \text{(by Lemma~\ref{lem: ak})} \\
    &\le \frac{2C}{d} s(f)
        \quad \text{(by Lemma~\ref{sym-fact})}.
\end{align*}

The proof of the lower bound in Proposition~\ref{sym-case} is based on a general lower bound of the Kolmogorov width of a spaces $C(K)$. Such a result was proved by G.~G.~Lorenz in \cite[Theorem~3]{Lor60} by using a ``hat-packing'' argument similar to the one in the proof of Theorem~\ref{lb-thm}. A simplified version of Lorenz's result can be stated as follows:

\begin{theorem}[Lorenz's bound]   \label{thm: Lorenz}
    Consider a metric space $(K,d)$ that consists of $k+1$ points. Suppose that all pairwise distances between these points are bounded below by $a>0$. Let $H$ be a $k$-dimensional linear space of real-valued functions on $K$. Then there exists a function $\phi: K \to \R$ that is $1$-Lipschits, i.e. satisfying $|\phi(x)-\phi(y)| \le d(x,y)$ for all $x,y \in K$, which satisfies 
    $$
    \|\phi-h\|_\infty \ge \frac{a}{4} \quad \text{for all } h \in H.
    $$
\end{theorem}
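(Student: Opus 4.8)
The plan is to prove Lorenz's bound (Theorem~\ref{thm: Lorenz}) by a ``hat-packing'' / pigeonhole argument exploiting the dimension gap: $H$ has dimension $k$, but $K$ has $k+1$ points, so there must be a nonzero linear functional vanishing on $H$ when we evaluate at all points, or equivalently the evaluation map $H \to \R^{k+1}$ is not surjective. First I would fix the $k+1$ points of $K$, say $x_0,\dots,x_k$, and consider the evaluation map $T\colon H \to \R^{k+1}$, $Th = (h(x_0),\dots,h(x_k))$. Since $\dim H = k < k+1$, the image $TH$ is a proper subspace of $\R^{k+1}$, so there exists a nonzero vector $c = (c_0,\dots,c_k) \in \R^{k+1}$ orthogonal to $TH$, i.e.\ $\sum_{i=0}^k c_i h(x_i) = 0$ for all $h \in H$. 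Normalize so that $\sum_i |c_i| = 1$. Split the index set into $P = \{i : c_i \ge 0\}$ and $N = \{i : c_i < 0\}$, both nonempty since $c \neq 0$ and $\sum c_i \cdot 1 = 0$ (the constant function need not lie in $H$, but one can instead just note both signs must occur because $c \neq 0$ and the orthogonality forces a sign change only if $1 \in H$; to be safe, if $H$ does not contain constants one argues separately — see below).

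The heart of the argument is then to build the $1$-Lipschitz ``hat'' function $\phi$. The idea is to make $\phi$ large and positive on the points indexed by $P$ and large and negative on the points indexed by $N$, as much as the Lipschitz constraint permits. Concretely, I would define
\begin{align*}
\phi(x) = \tfrac{a}{4}\Big(\min_{i \in N} d(x,x_i) - \min_{i \in P} d(x,x_i)\Big),
\end{align*}
or a suitable variant; being a difference of (truncated) distance functions, $\phi$ is automatically $1$-Lipschitz after the $a/4$ (or $a/2$) scaling, and since pairwise distances are at least $a$, one checks that $\phi(x_i) \ge a/4$ for $i \in P$ and $\phi(x_i) \le -a/4$ for $i \in N$ (the gap between the two competing minima at a data point is at least $a$). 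Now for any $h \in H$, pairing against $c$ gives
\begin{align*}
0 = \sum_{i} c_i h(x_i) = \sum_i c_i\big(h(x_i) - \phi(x_i)\big) + \sum_i c_i \phi(x_i).
\end{align*}
Since $c_i \phi(x_i) \ge \tfrac{a}{4}|c_i|$ for every $i$ (positive $c_i$ meets positive $\phi$, negative meets negative), the last sum is $\ge a/4$. Hence $\big|\sum_i c_i (h(x_i)-\phi(x_i))\big| \ge a/4$, and by the triangle inequality and $\sum |c_i| = 1$ we get $\|\phi - h\|_\infty \ge \max_i |h(x_i) - \phi(x_i)| \ge a/4$, which is exactly the claim.

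The main obstacle I anticipate is the bookkeeping around two points: first, ensuring both $P$ and $N$ are nonempty — if $H$ happens not to contain the constant function, one may merely have all $c_i$ of one sign, in which case the argument above fails and one should instead replace $H$ by $H + \R\cdot 1$ (dimension at most $k+1$), or, cleaner, observe that we may always assume $1 \in H$ after enlarging $H$ by one dimension and noting this only makes the approximation easier, so without loss of generality $k+1 > \dim H \ge$ the enlarged dimension forces the strict inequality we need — I would state Lorenz's theorem in the form actually used (where $H$ is the degree-$\le d$ polynomial space restricted to $\{0,\dots,n\}$, which does contain constants), so this is a non-issue in the application. Second, getting the constant sharp: one must verify the inequality $\phi(x_i) \ge a/4$ carefully, using that for $i \in P$ the term $\min_{j \in P} d(x_i, x_j) = 0$ while $\min_{j \in N} d(x_i,x_j) \ge a$, giving $\phi(x_i) \ge \tfrac{a}{4}\cdot a \cdot a^{-1}$... here the scaling constant has to be chosen so that the $1$-Lipschitz property and the $a/4$ lower bound hold simultaneously; a difference of two $1$-Lipschitz functions is $2$-Lipschitz, so the correct normalization is $\phi(x) = \tfrac14\big(\min_{i\in N} d(x,x_i) \wedge a - \min_{i \in P} d(x,x_i)\wedge a\big)$ with the truncation at $a$, which is $\tfrac12$-Lipschitz $\le 1$-Lipschitz and achieves $|\phi(x_i)| \ge a/4$. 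Once this normalization is pinned down, the rest is the short pairing computation above.
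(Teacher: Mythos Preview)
The paper does not actually prove Theorem~\ref{thm: Lorenz}; it cites Lorentz~\cite{Lor60} and remarks that the proof is a ``hat-packing'' argument similar to the paper's own proof of Theorem~\ref{lb-thm} (which uses Schl\"afli sign-pattern counting). Your argument is correct and is in fact the clean, linear-algebraic version of that idea: instead of counting all $2^{k+1}$ sign patterns and invoking Schl\"afli's bound to find one that no $h\in H$ realizes, you produce the bad sign pattern directly as $(\sgn c_i)_i$, where $c\in\R^{k+1}$ is any nonzero vector annihilating the image of the evaluation map $H\to\R^{k+1}$. The ``hat'' construction you write down, $\phi=\tfrac14\big(\min_{i\in N}d(\cdot,x_i)\wedge a-\min_{i\in P}d(\cdot,x_i)\wedge a\big)$, is exactly the Lorentz hat function, and your pairing computation is the standard one. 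What your route buys is brevity and the avoidance of the Schl\"afli machinery in the special case of a one-dimensional gap; what the counting route buys is robustness when the gap between $\dim H$ and $|K|$ is large (as in Theorem~\ref{lb-thm}).

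One small repair. Your worry about $P$ or $N$ being empty is legitimate, but your proposed fix (enlarging $H$ to $H+\R\cdot 1$) is the wrong move: if $1\notin H$ this raises the dimension to $k+1$ and kills the dimension argument. The correct fix is trivial: if all $c_i$ have the same sign, take $\phi\equiv a/4$ (or equivalently, adopt the convention $\min_{\emptyset}(\cdot)\wedge a=a$ in your formula). Then $\phi$ is $0$-Lipschitz and $\sum_i c_i(\phi-h)(x_i)=\tfrac{a}{4}\sum_i c_i=\pm\tfrac{a}{4}$, so $\|\phi-h\|_\infty\ge a/4$ as before. With that, the proof is complete as written.
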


To prove the lower bound in Proposition~\ref{sym-case}, we apply Lorenz's Theorem~\ref{thm: Lorenz} for the metric space $K=\{0,\ldots,n\}$ equipped with the usual distance, and for the space $H$ of (restrictions of) univariate polynomials on $K$ whose degree is bounded by $d$. Then $k := \dim(H) = d+1$. Placing the points at equal distances, we can find $k+1$ points in $K$ whose all pairwise distances are at least $a = \lfloor n/(k+1) \rfloor = \lfloor n/(d+2) \rfloor$. Therefore Lorenz's Theorem~\ref{thm: Lorenz} yields the existence of a $1$-Lipschitz function $\phi$ on $K$ that satisfies 
$$
\inf_{\mathrm{deg}(h)\leq d} \max_{k \in K} |\phi(k)-h(k)| \ge \frac{a}{4} \ge \frac{n}{8d}.
$$
Recalling Lemma~\ref{sym-lbd} and the representation of symmetric functions in \eqref{eq: f phi}, we conclude that there exists a symmetric function $f: \{0,1\}^n \to \R$ such that 
$$
E_d^n(f) = \inf_{\mathrm{deg}(g)\leq d} \|f-g\|_{\infty} \ge \frac{n}{8d}.
$$
On the other hand, Lemma~\ref{sym-fact} and the fact that $\phi$ is $1$-Lipschitz imply
$$
s(f) \le n \max_{k \in [n]}|\phi(k) - \phi(k-1)| \le n. 
$$
Combining these two bounds concludes the proof of the lower bound in Proposition~\ref{sym-case}.

\subsection{Proof of Theorem~\ref{lb-thm}}
The main idea of the proof is to construct a class of low sensitivity functions that is {\em complex} enough, so that some of them can not be approximated efficiently. 

We construct such class of functions by considering the maximal number of disjoint {\em smooth} hamming balls (the {\em hat} functions defined in the latter sentence) that we can pack into the cube. More specifically, for $x \in \{0,1\}^n$, let $h_x$ be a 
{\em hat} function centered at the point $x$ with radius $m$, which is defined by assigning $\frac{m-k}{m}$ to each point with hamming distance $k$ to the center if $k < m$, and zero otherwise. Let $K$ be a maximal packing with distance $2m$ on the hypercube $\{0,1\}^n$, i.e., a subset of $\{0,1\}^n$ with maximal number of elements such that the hamming distance between any two  point in the subset is strictly greater than $2m$. Consider the class of functions
\begin{align*}
F := \left\{ \sum_{x \in K} \varepsilon_x h_x \; | \;  \varepsilon_x \in \{1,-1\}, \forall x \in K \right\}.
\end{align*}
The cardinality of $F$ is $2^{P(K,2m)}$, where $P(K, 2m)$ denotes the packing number of $K$ with distance $2m$.  Notice that 
\begin{align*}
s(f) = \frac{n}{m} \quad \text{for all} \quad f \in F. 
\end{align*} 
Next we show that some of elements in $F$ cannot be approximated by low degree polynomials. Fix $d \in \{1,\ldots,n\}$. The polynomials of degree at most $d$ form  linear space
$$
L_{d} \coloneqq \left\{ g:\; \deg(g) \le d \right\}
\quad \text{with} \quad
\dim(L_{d}) = \binom{n}{\le d} \eqqcolon N,
$$
where we used the notation $\binom{n}{\leq k} := \sum_{j=0}^{k}\binom{n}{j}$. 

For each $x \in \{0,1\}^n$, consider the central hyperplane 
$$
H_x \coloneqq \{g \in L_d:\; g(x)=0\}.
$$
There are $ 2^n$ such hyperplanes which partition the space $L_d$ into regions (here we think every function on the cube as a vector in $\mathbb{R}^{2^n}$, and $L_d \setminus (\cup_x h_x)$ is a union of connected components, which are called regions here). Notice that each region is the collection of polynomials $g \in L_d$ that have the same sign pattern $\sign(g)$, i.e., $\mathrm{sign}(g_{1})=\mathrm{sign}(g_{2})$ if and only if $g_{1}, g_{2}$ are in the same connected component. Thus, the number of all possible sign patterns $\sign(g)$ equals the number of regions. On the other hand, for $f \in F$, in order to have $\inf_{g \in L_{d}} \| f-g \|_\infty < 1 = c_2 s(f)$, one must have $\sign(g)=\sign(f)$ on $K$. Thus, the number of $f \in F$ that satisfy this inequality is less or equal than the number of all possible sign patterns.

Schlaffli's formula in the hyperplane arrangement literature \cite[Lemma 2.2]{BV19} states that the number of sign patterns $S$ generated by polynomials of degree at most $d$ is at most $2\binom{2^n -1}{\leq N -1}$ which, in turn, is at most 
$$
 \binom{2^{n}}{\leq N}
$$
whenever $d \leq n/2$ because $ \binom{2^{n}}{N} =  \frac{2^{n}}{N}\binom{2^{n}-1}{N -1}$, and $\frac{2^{n}}{N} = \frac{2^{n}}{\binom{2^{n}}{\leq d}}\geq 2$.

Thus if $\inf_{g \in L_{d}} \|f-g\|_{\infty}<c_{2}s(f)$ holds for all $f \in F$ then we must have 
\begin{align}\label{cina}
    2^{P(K, 2m)}\leq \binom{2^n}{\leq \binom{n}{\leq d}}. 
\end{align}
Let us show that this inequality is not possible in the regime $d=c_{1}n,$  where $c_{1}<1/2$ is fixed, and $n$ is large. 
Indeed, consider binary entropy function $E(p) = p \log_{2}(1/p)+(1-p)\log_{2}(1/(1-p))$. This is a concave function on $[0,1]$, symmetric with respect to $p=1/2$, and such that $E(0)=E(1)=0,$ and $E(1/2)=1$. Let $m=c_{2}n$, where $c_{2} \in (0,1/4)$ is so small that $E(2c_{2})+E(c_{1})-1<0$. Pick any $n >\frac{1}{1-E(c_{1})}$. By \cite[Lemma 16.19]{FlGr} we have $\binom{n}{\leq k} \leq 2^{nE(k/n)}$, whenever $k/n \in (0, 1/2]$. Thus
\begin{align}\label{inf1}
    \binom{2^n}{\leq \binom{n}{\leq d}} \leq 2^{2^{n} E\left(\frac{\binom{n}{\leq c_{1}n}}{2^{n}}\right)} \leq 2^{2^{n}E(2^{n(E(c_{1})-1)})}
\end{align}
where in the second inequality we  used \cite[Lemma 16.19]{FlGr},  the fact that $2^{n(E(c_{1})-1)} \in (0, 1/2)$, and $E(t)$ is nondecreasing on $(0,1/2].$

Replacing volume by cardinality in \cite[Proposition 4.2.12]{Vershynin18} we have 
\begin{align}\label{roman}
  2^n  / { n \choose \leq 2m } \leq P(K,2m),
\end{align}
which is also addressed in \cite[Exercise 4.1.16]{Vershynin18}.
Therefore validity of the inequality (\ref{cina}) implies 
\begin{align*}
    1 \leq \binom{n}{\leq 2m} E(2^{n(E(c_{1})-1)}) \leq 2^{n E(2c_{2})} E(2^{n(E(c_{1})-1)}) \leq  2\cdot 2^{n(E(2c_{2})+E(c_{1})-1)} n(1-E(c_{1})), 
\end{align*}
where in the last inequality we used the simple fact that $E(t) \leq 2 t \log_{2}(1/t)$ for all $t \in (0,1/2]$.
On the other hand the inequality $1 \leq 2\cdot 2^{n(E(2c_{2})+E(c_{1})-1)} n(1-E(c_{1}))$ fails for $n$ sufficiently large (say all $n\geq n_{0}(c_{1})$) due to the assumption $E(2c_{2})+E(c_{1})-1<0$.  However, notice that in the beginning of the proof we could assume without loss of generality that $n \geq n_{0}(c_{1})$ because for small finite number of $n$'s with $1\leq n \leq n_{0}(c_{1})$ we can just take any function $f$ of degree exactly $n$ and in this case we trivially have $\inf_{\mathrm{deg}(g)\leq c_{1}n} \|f-g\|_{\infty}\geq c_{2}(n)s(f)$ holds for some $c_{2}(n)>0$. This finishes the proof of the theorem. 

\subsection{Proof of Corollary~\ref{gamo1}}
The example $f : \{0,1\}^{n} \to [-1,1]$ constructed in the proof of Theorem~\ref{lb-thm} for sufficienlty large $n$  has the property  $s(f) = 1/c_{2}$, and $E_{c_{1}n}^{n}(f)\geq 1$. Clearly this implies $\widetilde{\mathrm{deg}(f)} \geq c_{1}n$, and hence $s(f) \geq h(\widetilde{\mathrm{deg}(f)})$ is not possible as $n \to \infty$. 

\subsection{Proof of equiavlence (\ref{Jak00}) and (\ref{Jak1})}\label{help01}

First let us show the implication (\ref{Jak00}) implies (\ref{Jak1}). Assume $s(f) \geq c_{0} (\mathrm{deg}(f))^{c}$ holds with some universal constants $c_{0}, c_{1}>0$. We claim that $E_{d}^{n}(f) \leq \frac{1}{c_{0}d^{c}}s(f)$ holds  for all Boolean $f$.  Indeed, if $\mathrm{deg}(f)=0$ then the claim is trivial. Assume $\mathrm{deg}(f)\geq d$,  then $\frac{1}{c_{0}d^{c}}s(f)\geq 1$. Since $E_{d}^{n}(f)\leq 1$ the claim follows. 

Next let us show the implication (\ref{Jak1}) implies (\ref{Jak00}). Assume $E_{d}^{n}(f) \leq \frac{c_{1}}{d^{c_{2}}}s(f)$ holds with some universal constants $c_{1}, c_{2}>0$. Choose $d=\widetilde{\mathrm{deg}}(f)-1$. Then we obtain 
\begin{align}\label{dax1}
\frac{1}{3} \leq  \frac{c_{1}}{(\widetilde{\mathrm{deg}}(f)-1)^{c_{2}}}s(f).  
\end{align}
Since $\mathrm{deg}(f) \leq c  \widetilde{\mathrm{deg}}(f)^{8}$, see \cite{NS93}, we obtain (\ref{Jak00}). In this argument without loss of generality we  assume $\widetilde{\mathrm{deg}}(f)>100$ this assumption avoids the issue in (\ref{dax1}) when $\widetilde{\mathrm{deg}}(f)=1$.

\subsection{Proof of Corollary~\ref{heat11}}
Since $s(f)(x) \geq |\Delta f|(x)$ it follows that the inequality (\ref{dabali}) in Theorem \ref{lb-thm} holds true if we replace $s(f)$ by $\| \Delta f \|_\infty$. By the duality argument presented in \cite[Theorem 1]{EI23} (in our case we use Laplacian instead of the discrete gradient), the estimate $E_{c_{1}n}^{n}(f)\geq c_{2} \| \Delta f\|_{\infty}$ implies
$$
\sup_{f(x) = \sum_{|S|> c_1 n} \widehat{f}(S)W_{S}(x)}\frac{\| f \|_1}{\|\Delta f\|_1} \geq C>0,
$$
for some absolute constant $C$ depending on $c_{1}$.

\subsection{Proof of Theorem~\ref{cor-subapp}}
The {\em sign pattern argument} presented in the proof of Theorem~\ref{lb-thm} works verbatim for any subspace of the same dimension $\binom{n}{\leq c_{1}n}$ where $n\geq n_{0}(c_{1})$.


\subsection{Proof of Theorem~\ref{subsapp}}
Without loss of generality, we can assume that $s(f)=1$. Every such function is {\em approximately harmonic}: its value at any point $x$ is within $1/n$ from the arithmetic mean of the values at the neighbors of $x$. This follows from 
\begin{align}
	|f(x) - \frac{1}{n} \sum_{i \in [n]} f(x^{i}) | \leq \frac{1}{n} | \sum_{i \in [n]} f(x) -f(x^i)| \leq \frac{1}{n} s(f). 
\end{align}

The cube consists of even points and odd points (according to the number of ones). Define $E$ as the set of  all {\em odd-harmonic functions} -- those whose value at every odd point is the arithmetic mean of its neighbors (which are all even). The dimension of $E$ is $2^{n-1}$ since it is defined by $2^{n-1}$ linear constrains -- one per every odd point.

Pick any function $f$ with $s(f)\leq 1$ and choose  $g \in E$ that coincides with $f$ at all even points. Since $g \in E$, the value of $g$ at each odd point $x$ is the arithmetic mean of the values of $g$ at the neighbors of $x$. By the fact that $f$ is {\em approximately harmonic}, this value is within $1/n$ from $f$.

\subsection{Proof of Theorem~\ref{mtavarit}} 
The first inequality in (\ref{mtavaritt}) is trivial since the convolution $\mathbb{E}_{y} f(y)H(y\oplus x)$ is a polynomial of degree at most $d$.  We verify the second inequality
\begin{align*}
\max_{x \in \{0,1\}^{n}}| f(x) - \mathbb{E}_{y} f(y)H(y\oplus x)| \leq 3\frac{s(f)}{n} \mathbb{E}X|h(X)|,
\end{align*}
where $y \sim \mathrm{Unif}(\{0,1\}^n)$, and $X \sim \mathrm{Bin}(n,1/2)$. Let $x^{*} \in \{0,1\}^{n}$ be such that $\max_{x}| f(x) - \mathbb{E}_{y} f(y)H(y\oplus x)| =| f(x^{*}) - \mathbb{E}_{y} f(y)H(y\oplus x^{*})|$. Set $F(x) = f(x\oplus x^{*})$. Notice that $s(F)=s(f)$, and 
\begin{align*}
&\max_{x}| f(x) - \mathbb{E}_{y} f(y)H(y\oplus x)| = | F(0, \ldots, 0) - \mathbb{E}_{y} F(y\oplus x^{*})H(y\oplus x^{*})|=\\
& | F(0, \ldots, 0) - \mathbb{E}_{y} F(y)H(y)| \leq  \mathbb{E}|F(0, \ldots, 0)-F(y)| |H(y)| =\\
& \frac{1}{2^{n}}\sum_{k=0}^{n}|h(k)| \sum_{|y|=k} | F(0,\ldots, 0)-F(y)|.
\end{align*}
To complete the proof of the theorem it suffices to show
\begin{lemma}
For each $k=1, \ldots, n$ we have
\begin{align*}
\sum_{|y|=k} | F(0,\ldots, 0)-F(y)| \leq  3 s(F) \binom{n-1}{k-1}.
\end{align*}
\end{lemma}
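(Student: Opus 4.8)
The plan is to treat two ranges of $k$ separately. When $k$ is not too large ($1\le k\le 2n/3$, say) I would use a path-averaging argument. Fix $y$ with $|y|=k$. For each bijection $\tau$ from $\{1,\dots,k\}$ onto the support of $y$, let $y^\tau_i=\{\tau(1),\dots,\tau(i)\}$, so that $y^\tau_0=\emptyset$ and $y^\tau_k=y$ describe a monotone path in the cube from $(0,\dots,0)$ to $y$. Telescoping along this path, applying the triangle inequality, and then averaging over all $k!$ choices of $\tau$ gives
\[
|F(0,\dots,0)-F(y)|\le \frac1{k!}\sum_{\tau}\sum_{i=1}^{k}\bigl|F(y^\tau_{i-1})-F(y^\tau_i)\bigr|.
\]
Summing this over all $y$ of weight $k$ and grouping the terms on the right according to the edge $(S,S\cup\{j\})$ (with $|S|=i-1$ and $j\notin S$) crossed at step $i$, I would count that each such edge is crossed by exactly $\binom{n-i}{k-i}(i-1)!(k-i)!$ pairs $(y,\tau)$, and use the definition of sensitivity in the form $\sum_{|S|=i-1}\sum_{j\notin S}|F(S)-F(S\cup\{j\})|\le \binom{n}{i-1}s(F)$.

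After simplifying the factorials — the product $\tfrac1{k!}\binom{n-i}{k-i}(i-1)!(k-i)!\binom{n}{i-1}$ collapses to $\binom nk\tfrac1{n-i+1}$ — this yields
\[
\sum_{|y|=k}\bigl|F(0,\dots,0)-F(y)\bigr|\ \le\ s(F)\binom nk\sum_{i=1}^{k}\frac1{n-i+1}.
\]
Each of the $k$ terms in the last sum is at most $\tfrac1{n-k+1}$, and when $k\le 2n/3$ we have $n-k+1>n/3$, so the sum is $<3k/n$; since $\binom nk\cdot\tfrac kn=\binom{n-1}{k-1}$, the claimed bound follows in this range.

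For the remaining range $2n/3\le k\le n$ I would instead use Pisier's inequality \eqref{eq: Wagner}: there is a constant $c$ with $\|F-c\|_\infty\le s(F)$, hence $|F(x)-F(y)|\le 2s(F)$ for all $x,y$. In particular $|F(0,\dots,0)-F(y)|\le 2s(F)$ for each of the $\binom nk$ weight-$k$ points, so the left-hand side is at most $2\binom nk s(F)$, and $2\binom nk\le 3\binom nk\tfrac kn=3\binom{n-1}{k-1}$ because $k\ge 2n/3$. The two ranges cover all $k\in\{1,\dots,n\}$. The only delicate part is the combinatorial bookkeeping in the first case — verifying the edge-crossing count $\binom{n-i}{k-i}(i-1)!(k-i)!$ and that the factorial product really telescopes to the clean harmonic expression; everything else is a one-line estimate, and the value $3$ of the constant is exactly what makes the two regimes meet.
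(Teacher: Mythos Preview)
Your proof is correct, and its high-level architecture matches the paper's: split into small and large $k$, handle small $k$ by averaging over monotone paths and bounding via the local sensitivities, and handle large $k$ by invoking Wagner/Pisier. The execution, however, differs in two places and is in fact cleaner than the paper's.

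For small $k$ the paper rewrites the path sum as an exact identity $\sum_{|x|\le k-1}p_{|x|}\,s(F)(x)$ with coefficients $p_\ell=\tfrac{\ell!(n-\ell)!}{k!(n-k)!(n+1)}-\tfrac{(-1)^{k+\ell}}{n+1}$ (using that each edge is counted once from each endpoint in the full local sensitivity), and then bounds $\sum_\ell |p_\ell|\binom{n}{\ell}\le 2\binom{n-1}{k-1}$ via $\binom{n}{\ell}\le\binom{n}{k}$; this forces the cutoff at $k\le n/2$. Your argument instead keeps only the ``upward'' part of each $s(F)(x)$ and collapses the combinatorics directly to the harmonic expression $\binom{n}{k}\sum_{i=1}^{k}\tfrac{1}{n-i+1}$, which is simpler to verify and lets you push the cutoff out to $k\le 2n/3$.

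For large $k$ the paper uses the endpoint case of Wagner's inequality, $|F(0,\dots,0)-F(1,\dots,1)|\le s(F)$, and then reduces back to the small-$k$ case by flipping all coordinates. Your direct use of $\|F-c\|_\infty\le s(F)$ to obtain the uniform bound $|F(0,\dots,0)-F(y)|\le 2s(F)$ avoids this self-reference entirely. The choice of the cutoff $2n/3$ is exactly what makes your two estimates $\binom{n}{k}\cdot\tfrac{k}{n-k+1}$ and $2\binom{n}{k}$ meet at $3\binom{n-1}{k-1}$, so the constant is sharp for your argument. Your edge-count $\binom{n-i}{k-i}(i-1)!(k-i)!$ and the factorial simplification to $\binom{n}{k}\tfrac{1}{n-i+1}$ are both correct.
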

\begin{proof}
Without loss of generality assume $s(F)= 1$. 

Consider the case $k \leq \frac{n}{2}$. We have 
\begin{align*}
\sum_{|y|=k} | F(0,\ldots, 0)-F(y)| \leq  \frac{1}{k!}\sum_{|y|=k}\sum_{\mathrm{paths} (a_{0}-a_{1}-\cdots-a_{k})} |F(a_j)-F(a_{j+1})|,
\end{align*}
where inner sum runs over all {\em monotone paths} over vertices $(0,\ldots, 0)=a_{0}, .., a_{k}=y$ joining the points $(0, \ldots, 0)$ and $y$. Here $\|a_{j+1}- a_{j}\|_{\ell_{1}^{n}}=1$ and $a_{j+1}\succ  a_{j}$ i.e., $(a_{j+1})_{k} \geq (a_{j})_{k}$ for all $k=1, \ldots, n$.  
We claim that 
\begin{align}\label{symsum}
\frac{1}{k!}\sum_{|y|=k}\sum_{\mathrm{paths} (a_{0}-a_{1}-\cdots-a_{k})} |F(a_j)-F(a_{j+1})| = \sum_{|x|\leq k-1}p_{|x|}\, s(F)(x),
\end{align}
where 
\begin{align*}
p_{\ell} := \frac{\ell! (n-\ell)!}{k! (n-k)! (n+1)} - \frac{(-1)^{k+\ell}}{n+1}, \quad \ell=0, \ldots, k.
\end{align*}
Indeed, a direct calculation shows
$$
p_{\ell}+p_{\ell+1} = \frac{\ell! (n-\ell-1)!}{k! (n-k)!}=\frac{ \ell! (k-\ell-1)!}{k!} \binom{n-\ell-1}{k-\ell-1} 
$$
for all $\ell=0, \ldots, k-1$, and $p_{k}=0$. For each $y \in \{0,1\}^{n}$, $|y|\leq k-1$, the term $|F(y)-F(y^{i})|$ appears $p_{\ell}+p_{\ell+1}$ times in the right hand side of (\ref{symsum}), where $|y|=\ell$ and $|y^{i}|=\ell+1$. Let us count the total number of paths passing through the edge $[y,y^{i}]$  in the left hand side of (\ref{symsum}).   We enter the vertex $y$ in  $\ell!$  ways, and exit the vertex $y^{i}$ in $(k-\ell-1)! \binom{n-\ell-1}{k-\ell-1}$ ways, and this gives  $\ell! (k-\ell-1)!\binom{n-\ell-1}{k-\ell-1}$ total number of paths.

Next we show 
\begin{align*}
\sum_{\ell=0}^{k}|p_{\ell}| \binom{n}{\ell} \leq 2\binom{n-1}{k-1}.
\end{align*}
The inequality simplifies to 
\begin{align*}
\sum_{\ell=0}^{k}\left| 1- (-1)^{k+\ell} \frac{\binom{n}{\ell}}{\binom{n}{k}}\right| \leq \frac{2k(n+1)}{n}.
\end{align*}
The inequality follows from the fact that $\binom{n}{\ell}\leq \binom{n}{k}$ for all $\ell=0, \ldots, k$ and all $k\leq n/2$. 

Next, let us consider the case $k \geq n/2$. The main result in \cite{Wag1} says that $
|F(0, \ldots, 0)-F(1, \ldots, 1)|\leq s(F)$. Therefore, since $\sum_{|y|=k}1  = \binom{n}{k} \leq 2 \binom{n-1}{k-1}$ it suffices to show 
\begin{align*}
\sum_{|y|=k}|F(1, \ldots, 1)-F(y)|\leq \binom{n-1}{k-1}.
\end{align*}
On the other hand for $\tilde{F}(x) = F(x \oplus (1, \ldots, 1))$ we have $s(F)=s(\tilde{F})$, and 
\begin{align*}
\sum_{|y|=k}|F(1, \ldots, 1)-F(y)| = \sum_{|w|=n-k}|\tilde{F}(0, \ldots, 0) - \tilde{F}(w)| \leq \binom{n-1}{n-k-1}\leq \binom{n-1}{k-1}.
\end{align*}
\end{proof}

\subsection{Proof of Corollary~\ref{k-bound}}\label{kuadratura1}
We need the following lemma which follows from \cite[Theorem 2.1 equation (2.7)]{DIM24}. 
\begin{lemma} \label{lemma-int} Let $N$ be an odd integer, and let $\gamma(t)$ denote the moment curve in $\mathbb{R}^{N}$ on the interval $[a,b]$, i.e., $\gamma(t)=(t,\ldots,t^N), t \in [a,b]$. Then $\sum_{k=1}^{(N+3)/2} \omega_k \gamma(t_k)$ is an interior point of the convex hull of the moment curve, where $a = t_1 < \cdots < t_{(N+3)/2}=b$ and $\sum_{k=1}^{(N+3)/2} \omega_k =1, \omega_k \in (0,1)$.
\end{lemma}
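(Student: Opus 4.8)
The plan is to establish Lemma~\ref{lemma-int} by exhibiting the point $\sum_{k=1}^{(N+3)/2} \omega_k \gamma(t_k)$ as a genuinely interior point of $\conv(\gamma([a,b]))$, rather than merely a boundary point. First I would recall the classical fact (Carath\'eodory plus the curvature of the moment curve) that the convex hull of the moment curve $\gamma$ in $\R^N$ with $N$ odd is a \emph{simplicial} neighborly polytope-like body: a point on the curve at parameter $t$ lies on the boundary, and a convex combination $\sum \omega_k \gamma(t_k)$ with $\omega_k>0$ lies in the interior precisely when the number of nodes $t_k$ used is large enough --- specifically, when the ``defect'' $N - (\text{number of active constraints})$ is exhausted. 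The standard counting, which I would invoke from \cite[Theorem 2.1 eq.~(2.7)]{DIM24}, is that for $N$ odd one needs $(N+1)/2$ interior nodes, and allowing the two endpoints $a,b$ as additional (boundary-multiplicity-two) nodes brings the total to $(N+3)/2$ nodes. So the heart of the matter is to check that the specific configuration $a = t_1 < \cdots < t_{(N+3)/2} = b$ with strictly positive weights summing to $1$ meets the interiority criterion of that theorem.

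The key steps, in order: (i) state precisely the criterion from \cite{DIM24} for when a quadrature node configuration yields an interior point of $\conv(\gamma([a,b]))$ --- this is phrased there in terms of an index/multiplicity count where each interior node counts twice and each endpoint counts once toward the ``degree of exactness'' the configuration can carry, and interiority holds once this count strictly exceeds $N$; (ii) compute this count for our configuration: we have $2$ endpoints contributing $1$ each and $(N+3)/2 - 2 = (N-1)/2$ interior nodes contributing $2$ each, giving $2 + (N-1) = N+1 > N$; (iii) conclude interiority, and note that the normalization $\sum \omega_k = 1$ places the point in the affine hull correctly (the moment curve does not pass through the origin for $a>0$ or more generally spans an affine hyperplane only in degenerate cases, so the constraint $\sum\omega_k=1$ is exactly the right affine normalization). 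Then $\sum_{k=1}^{(N+3)/2}\omega_k\gamma(t_k)$ is an interior point as claimed.

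I would also spell out why interiority, not just membership, is what we get: if the point were on the boundary it would lie on a supporting hyperplane $\{y : \langle c, y\rangle = \alpha\}$ with $\langle c,\gamma(t)\rangle \le \alpha$ for all $t\in[a,b]$; writing $p(t) = \alpha - \langle c,\gamma(t)\rangle$, this is a nonzero polynomial of degree $\le N$ that is nonnegative on $[a,b]$ and vanishes at every node $t_k$ appearing with positive weight. An interior node forces a double root (local min with value zero) and an endpoint forces a simple root, so $p$ has at least $N+1$ roots counted with multiplicity --- contradicting $\deg p \le N$ and $p \not\equiv 0$. This root-counting argument is exactly the substance behind \cite[eq.~(2.7)]{DIM24}, and I would present it directly as the cleanest self-contained route.

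The main obstacle I anticipate is bookkeeping rather than conceptual: getting the multiplicity count exactly right at the endpoints. One must be careful that an endpoint of $[a,b]$ contributes only a \emph{simple} forced root of $p$ (since $p\ge 0$ on $[a,b]$ does not constrain the derivative of $p$ at $a$ or $b$ from outside the interval), whereas an interior node forces a root of multiplicity at least two; miscounting this is the usual pitfall, and it is precisely why the formula features $(N+3)/2$ (two endpoints worth one each, plus $(N-1)/2$ interior nodes worth two each) rather than $(N+1)/2$. Once that accounting is pinned down and matched against the parity hypothesis ``$N$ odd'' (which guarantees $(N+3)/2$ is an integer and $(N-1)/2$ interior nodes is a nonnegative integer), the lemma follows immediately, and I would then use it in the quadrature argument for Corollary~\ref{k-bound} by taking $N = 2\lfloor d/2\rfloor + 1$ and the nodes to be the appropriate Kravchuk roots together with the endpoints $0$ and $n$.
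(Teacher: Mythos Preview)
Your proposal is correct and matches the paper's approach: the paper simply states that the lemma ``follows from \cite[Theorem~2.1, equation~(2.7)]{DIM24}'' without further argument, and your root-counting proof (a supporting hyperplane would yield a nonzero polynomial $p$ of degree $\le N$, nonnegative on $[a,b]$, with double roots at the $(N-1)/2$ interior nodes and simple roots at the two endpoints, hence $\ge N+1$ roots with multiplicity---contradiction) is exactly the content behind that citation. Your self-contained presentation is a welcome addition; the only inessential wrinkle is the aside about the affine normalization, which is not needed since $\conv(\gamma([a,b]))$ is full-dimensional in $\R^N$ (the $\gamma(t_k)$ are affinely independent by Vandermonde).
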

\begin{corollary}\label{corollary-int}
If a probaility measure $\mu$ on $[a,b]$ is supported on finitely many but at least $(N+3)/2$ points, and the support includes the boundary points $\{a,b\}$, then $\int \gamma(t) \, d\mu(t)$ is an interior point of the convex hull of the moment curve.
\end{corollary}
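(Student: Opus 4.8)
The plan is to derive Corollary~\ref{corollary-int} directly from Lemma~\ref{lemma-int}, essentially by recognizing that a probability measure supported on finitely many points is, by definition, a convex combination of point masses, so the integral $\int\gamma(t)\,d\mu(t)$ is a convex combination of points $\gamma(t_k)$ on the moment curve. The only issue is that $\mu$ might be supported on \emph{more} than $(N+3)/2$ points, whereas Lemma~\ref{lemma-int} is stated for exactly $(N+3)/2$ nodes; I will bridge this gap by a grouping argument.

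Concretely, write $\mu = \sum_{k=1}^{M} \omega_k \delta_{s_k}$ with $a = s_1 < \cdots < s_M = b$, $\omega_k \in (0,1)$, $\sum_k \omega_k = 1$, and $M \ge (N+3)/2$. If $M = (N+3)/2$ we are immediately in the situation of Lemma~\ref{lemma-int} and conclude that $\int\gamma\,d\mu = \sum_k \omega_k\gamma(s_k)$ is an interior point of the convex hull $\conv(\gamma([a,b]))$. If $M > (N+3)/2$, I would first select a sub-collection of exactly $(N+3)/2$ points that still includes both endpoints $a$ and $b$ — for instance, keep $s_1 = a$, $s_M = b$, and any $(N+3)/2 - 2$ of the interior nodes, calling them $t_1 = a < t_2 < \cdots < t_{(N+3)/2} = b$ — and apply Lemma~\ref{lemma-int} with a \emph{fixed} auxiliary weight vector (say the uniform weights $\omega_k = 2/(N+3)$) to obtain a specific interior point $q$ of the convex hull. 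Since the interior of a convex set is itself convex and open, and $\int\gamma\,d\mu$ lies in the closed convex hull (being a convex combination of points on the curve), it suffices to exhibit $\int\gamma\,d\mu$ as a point that can be perturbed toward $q$ while staying in the convex hull; more cleanly, I would argue that $\int\gamma\,d\mu = \lambda q + (1-\lambda)p$ for some $p \in \conv(\gamma([a,b]))$ and some $\lambda \in (0,1)$, which forces $\int\gamma\,d\mu$ into the interior because a nontrivial convex combination of an interior point with any point of a convex set lies in the interior.

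The cleanest route, which I would prefer to present, is the last one: set $q$ to be the interior point produced by Lemma~\ref{lemma-int} applied to the uniform weights on the chosen $(N+3)/2$ nodes. Then observe that $\int\gamma\,d\mu = \sum_{k=1}^M \omega_k\gamma(s_k)$ can be rewritten as $\lambda\,\tilde q + (1-\lambda)\,r$, where $\tilde q$ is the uniform average of $\gamma$ over the chosen nodes (so $\tilde q = q$) and $r$ is another convex combination of points $\gamma(s_k)$, provided $\lambda := (N+3)/2 \cdot \min\{\omega_k : s_k \text{ a chosen node}\}$ is positive and at most $1$ — which it is, since all $\omega_k \in (0,1)$ and there are $M \ge (N+3)/2$ of them. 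Here $r = (1-\lambda)^{-1}\sum_k(\omega_k - \lambda\cdot\mathbf 1[s_k\text{ chosen}]/((N+3)/2))\gamma(s_k)$ has nonnegative coefficients summing to $1$, hence $r \in \conv(\gamma([a,b]))$. Since $q$ is interior and $\lambda > 0$, the point $\lambda q + (1-\lambda)r$ is interior, completing the proof.

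I expect the main (though modest) obstacle to be the bookkeeping in the splitting $\mu = \lambda q + (1-\lambda)r$: one must verify that subtracting a small uniform mass from the chosen nodes leaves all residual coefficients nonnegative, which is exactly why $\lambda$ must be taken no larger than $(N+3)/2$ times the smallest relevant weight. Everything else is a direct invocation of Lemma~\ref{lemma-int} together with the elementary fact that a nontrivial convex combination involving an interior point of a convex set remains interior. No genuinely new idea is needed beyond these two ingredients.
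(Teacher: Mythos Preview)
Your proposal is correct and follows essentially the same approach as the paper: extract a convex combination supported on exactly $(N+3)/2$ nodes including both endpoints, apply Lemma~\ref{lemma-int} to get an interior point, and then use that a nontrivial convex combination of an interior point with any other point of the convex hull is again interior. The paper's splitting is marginally simpler---it just partitions the support into the $(N+3)/2$ chosen nodes (with their original $\mu$-weights, renormalized) and the remaining nodes, avoiding the uniform-weight subtraction and the associated bookkeeping on $\lambda$---but the argument is the same.
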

\begin{proof}
Notice that the integral is a convex combination of finitely many but more than $(N+3)/2$ points. Hence we can rewrite the convex combination as a convex combination of $(N+3)/2$ points, including $\gamma(a)$ and $\gamma(b)$, and a convex combination of the reminder. By the above Lemma \ref{lemma-int}, the first convex combination (when normilized, i.e.,  divided by the sum of its weights) is an interior point of the convex hull. Since a convex combination of an interior point and some other points in a convex set is again an interior point of the convex set, the value of the integral is an interior point of the convex hull of the moment curve.
\end{proof}

Now we turn to the proof of Corollary~\ref{k-bound}. Notice that 
\begin{align*}
\inf_{\mathrm{deg}(h) \leq d, \mathbb{E}(h(X)) =1} \mathbb{E} X|h(X)| \leq \inf_{\mathrm{deg}(Q) \leq \lfloor d/2 \rfloor, \mathbb{E} Q^{2}(X) =1} \mathbb{E} X Q^2(X).
\end{align*}
 We can obtain the exact value of the right hand-side  by the following 
 {\em quadrature argument}. Let $p = \lfloor d/2 \rfloor $. Consider the moment curve $\gamma(t) = (t,\ldots,t^{2p+1})$ on $[0,n]$, and a random variable $X \sim \mathrm{Bin}(n,1/2)$. By Corollary \ref{corollary-int}, $\mathbb{E} \gamma(X)$ is an interior point of the convex hull of $\gamma([0,n])$. By \cite{DIM24} there exist $0 < t_1 < \cdots < t_{p+1} < n$ such that 
\begin{align*}
\mathbb{E}\gamma(X) = \sum_{k=1}^{p+1} \omega_k \gamma(t_k),
\end{align*}
for some $\omega_k \in (0,1)$ with $\sum_{k=1}^{p+1} \omega_k = 1$. This implies that for any polynomial $P(t)$ of degree at most $2p+1$, we have that 
\begin{align*}
\mathbb{E} P(X) = \sum_{k=1}^{p+1} \omega_k P(t_k).
\end{align*}
In particular, choosing $P(t)$ to be $t^i K_{n,p+1}(t), i\in \{0,\ldots,p\}$, where $K_{n,p+1}$ is the Kravchuk polynomial of degree $p+1$ with respect to $\mathrm{Bin}(n,1/2)$,  we obtain
\begin{align*}
0=\mathbb{E} X^i K_{n,p+1}(X) = \sum_{k=1}^{p+1} \omega_k t_k^i K_{n,p+1}(t_k), \; i \in \{0,\ldots,p\}. 
\end{align*}
 Since the matrix $\{t_k^i\}_{0 \leq i \leq p, 1 \leq k \leq p+1}$ is a Vandermonde matrix with nonzero determinant, the linear system above only has the trivial solution, which means that $t_k$ are the roots of $p+1$ degree Kravchuk polynomial $K_{n,p+1}(t)$, and $t_1 = k_{n, p+1}$.

Let $h$ be a polynomial of degree at most $p$ with $\mathbb{E}h^{2}(X) \neq 0$, then 
\begin{align*}
\frac{\mathbb{E}X h^2(X)}{\mathbb{E} h^2(X)} = \frac{\sum_{k=1}^{p+1} \omega_k t_k h^2(t_k) }{\sum_{k=1}^{p+1} \omega_k h^2(t_k)} \geq t_1,
\end{align*}
since  $\frac{\mathbb{E}X h^2(X)}{\mathbb{E} h^2(X)}$ is a convex combination of $t_k's$.  Thus, we have 
\begin{align*}
\inf_{\mathrm{deg}(h) \leq p, \mathbb{E}h^{2}(X) \neq 0} \frac{\mathbb{E} X h^2(X)}{|\mathbb{E}h^2(X)|} \geq t_1.
\end{align*}
On the other hand, if we take $h$ to be a degree $p$ polynomial that vanishes at $t_k$'s for $k=2,\ldots,p+1,$ but does not vanish at $t_1$, then 
\begin{align*}
\frac{\mathbb{E}X h^2(X)}{\mathbb{E} h^2(X)} = t_1,
\end{align*}
which shows that the inequality above is an equality finishing the proof of the corollary.

\subsection{Proof of Corollary~\ref{pr-bound}}
By Theorem \ref{mtavarit}, it suffices to show that
\begin{align*}
\inf_{\mathrm{deg}(h)\leq d,  \mathbb{E}h(X) = 1} \mathbb{E}X|h(X)| \leq n-d, \quad X \sim Bin(n,1/2).
\end{align*}
Consider the following degree $d$ polynomial
\begin{align*}
\tilde{h}(x):= \prod_{k=n-d+1}^n (k-x), 
\end{align*}
and set $h(x) = \tilde{h}(x)/\mathbb{E} \tilde{h}(X)$. 
Notice that  $h \geq 0$ on $\{0, \ldots, n\}$ and $\mathbb{E} h(X) =1$. Since $h = 0$ on $\{n-d+1, \ldots, n\}$ it follows that $\mathbb{E} X |h(X)| \leq n-d$ which completes the proof.

\subsection{Proof of Corollary~\ref{genupper}}
By Corollary~\ref{pr-bound}, for $d=(1-\delta) n$ we have
\begin{align*}
\inf_{\mathrm{deg}(g)\leq d}\|f-g\|_{\infty} \leq 3 \delta s(f)
\end{align*}
which gives the first term in minimum  of the right hand-side of (\ref{upperbb1}). Bounding $k_{n,p}$ from above will give the second term.  By  \cite[Theorem 5.1]{ADGP13} (using the upper bound of $\kappa_{n,n}(p,N)$ in the reference with $p=1/2$; in our case $N$ is $n$, and $n$ is $p:=\lfloor d/2 \rfloor + 1$), we have that
\begin{equation}\label{root}
k_{n,p} \leq \frac{n}{2} - \sqrt{\frac{n-p+1}{2}}h_{p,1},  
\end{equation}
where $h_{p,1}$ is the largest root of the Hermite polynomial. By \cite[equation (6.32.5)]{S75} we have 
\begin{align*}
h_{n,p} =  (2p+1)^{\frac{1}{2}} - 6^{-\frac{1}{2}}(2p+1)^{-\frac{1}{6}}(i_1+\varepsilon_{n}) ,
\end{align*}
where $i_1$ is the lowest zero of the Airy function and it satisfies  $6^{-\frac{1}{2}} i_1 = 1.85575 \ldots$, and $\varepsilon_{n} \to 0$ as $n\to \infty$. 
Combining this result and inequality (\ref{root}), we obtain that
\begin{align*}
k_{n,p} \leq \frac{n}{2} - \sqrt{\frac{(n - p+1)(2p+1)}{2}} + c_{n} \sqrt{\frac{n - p+1}{2}}(2p+1)^{-1/6},
\end{align*}
where $c_{n}\geq 0$ and $c_{n} \to 1.85575\ldots$. We have 
\begin{align*}
    \frac{n}{2} - \sqrt{\frac{(n - p+1)(2p+1)}{2}} 
   = \frac{n}{2} - \sqrt{\frac{(n - \lfloor d/2 \rfloor)(2 \lfloor d/2 \rfloor + 3)}{2}}
   \leq \frac{n}{2} - \sqrt{(n-\frac{d}{2})\frac{d}{2}} = \frac{n}{2}(1-\sqrt{1-\delta^2}) \leq \frac{n}{2}\delta^2,
\end{align*}
where the first inequaltiy follows from $n - \lfloor d/2 \rfloor \geq n - d/2$ and $2 \lfloor d/2 \rfloor + 3 \geq d$, and the second inequality follows from $\sqrt{x} \geq x $ for all $x \in [0,1]$. For the remainder term, we have that 
\begin{align*}
    &c_{n} \sqrt{\frac{n - p+1}{2}}(2p+1)^{-1/6}
     = c_{n} \sqrt{\frac{n - \lfloor d/2 \rfloor}{2}}(2\lfloor d/2 \rfloor+3)^{-1/6} \leq \\
    & \frac{c_{n}}{\sqrt{2}} \sqrt{n-\frac{d}{2}+1}(d+3)^{-1/6} = \frac{c_{n}}{\sqrt{2}} \sqrt{(\frac{1+\delta}{2}+\frac{1}{n})n}[(1 - \delta +\frac{3}{n})n]^{-1/6} \leq \frac{c_{n}}{(1-\delta)^{1/6}} n^{1/3}.
\end{align*}
Combining the two estimates above, we have that
\begin{align*}
k_{n,p} \leq [0.5 \delta^2 + \frac{c_{n}}{(1-\delta)^{1/6}} n^{-2/3}]n.
\end{align*}
Thus by the Theorem \ref{k-bound}, we have that 
\begin{align*}
\inf_{\mathrm{deg}(g)\leq d}\|f-g\|_{\infty} \leq 3 (0.5 \delta^2 + \frac{c_{n}}{(1-\delta)^{1/6}} n^{-2/3}) s(f).
\end{align*}
Thus we obtain 
\begin{align*}
  \inf_{\mathrm{deg}(g)\leq d}\|f-g\|_{\infty} \leq 3\min\left\{\delta,  0.5 \delta^2 + \frac{c_{n}}{(1-\delta)^{1/6}} n^{-2/3}\right\} s(f) \leq C\min\{\delta, \max\{\delta^{2}, n^{-2/3}\}\}  
\end{align*}
finishing the proof of the corollary.

\subsection{Proof of Theorem~\ref{rocco}}\label{roc01}

Let $\langle \cdot, \cdot \rangle$ denote the dot product in $\mathbb{R}^{2^{n}}$. The vectors in $\mathbb{R}^{2^{n}}$ will be indexed by the set $\{0,1\}^{n}$. For example,  given $f :\{0,1\}^{n} \to \{-1,1\}$ we will write 
\begin{align*}
    \langle f, W_{S} \rangle = \sum_{x \in \{0,1\}^{n}} f(x)W_{S}(x).
\end{align*}
Fix any $\theta \in \{0,1\}^n$. We have 
$$
f_{>d}(\theta) 
= \frac{1}{2^n} \sum_{|S|>d} \ip{f}{W_{S}} W_{S}(\theta)
= \ip{f}{v_\theta}
\quad \text{where} \quad 
v_\theta(x) = \frac{1}{2^n} \sum_{|S|>d} W_{S}(\theta) W_{S}(x).
$$
Here we think about  $f$ as a vector in $\R^{2^n}$ with independent Rademacher coefficients, 
and  $v_\theta$ as a fixed vector in $\R^{2^n}$. 
Hoeffding's inequality gives
\begin{align*}
\| \ip{f}{v_\theta} \|_\psitwo 
\lesssim \| v_\theta \|_2
&= \frac{1}{2^n} \left( \sum_{|S|>d} \|W_{S}\|_2^2 \right)^{1/2}
\quad \text{(by orthogonality of monomials $W_{S}(x)$)}\\
&= \sqrt{2^{-n} \binom{n}{>d}}
\quad \text{(since $\| W_{S} \|_2^2 = 2^n$ for each $S$)}.
\end{align*}
It follows from definition of $\psitwo$ norm  \cite[Definition 2.5.6]{Vershynin18} that 
$$
\Pr{|\ip{f}{v_\theta}| > C\sqrt{2^{-n} \binom{n}{>d} n}} \le 2^{-2n}.
$$
Taking a union bound yields
$$
\Pr{\exists \theta \in \{0,1\}^n:\; |\ip{f}{v_\theta} > C\sqrt{2^{-n} \binom{n}{>d} n}} 
\le 2^n \cdot 2^{-2n} = 2^{-n}.
$$
Thus, with probability at least $1-2^{-n}$, we have
$$
\|f_{>d}\|_\infty \lesssim \sqrt{2^{-n} \binom{n}{>d} n}.
$$	
By Hoeffding's inequality, the quantity on the right hand side is bounded by $1/n^{10}$ whenever $d \ge n/2 + C\sqrt{n \log n}$.

\subsection{Proof of Theorem~\ref{thres1}}\label{cnobili}
Fix $d \in \{1,\ldots,n\}$. Polynomials of degree at most $d$ form the linear space
$$
L_{d} \coloneqq \left\{ g:\; \deg(g) \le d \right\}
\quad \text{with} \quad
\dim(L_{d}) = \binom{n}{\le d} \eqqcolon m.
$$
For each $x \in \{0,1\}^n$, consider the central hyperplane 
$$
H_x \coloneqq \{g \in L_d:\; g(x)=0\}.
$$
There are $p \coloneqq 2^n$ such hyperplanes, which partition the space $L_{d}$ into regions. Each region is formed by polynomials $g$ that have the same sign pattern $\sign(g)$. Thus, the number of all possible sign patterns $\sign(g)$ equals the number of regions. On the other hand, in order to have $\inf_{g \in L_{d}} \| f-g \|_\infty < 1$, one must have $\sign(g)=f$. Thus, the number of Boolean functions that satisfy this inequality equals the number of sign patterns, which equals the number of regions. 
	
Schlaffli's formula \cite[Lemma2.2]{BV19} states that for any arrangement of $p$ central hyperplanes in $\R^m$, the number of regions is bounded by 
$$
r(p,m) \coloneqq 2 \binom{p-1}{\le m-1}.
$$
If $d \leq n/2$ then 
$$
m = \binom{n}{\le d} \le 2^n/2 = p/2.
$$ 
Thus $m-1 < (p-1)/2$, hence 
$r(p,m) \le 2 \cdot 2^{p-1}/2 = 2^{2^n}/2$. 
We showed that the number of Boolean functions that satisfy $\inf_{g \in L} \| f-g \|_\infty < 1$ is at most $2^{2^n}/2$.

\section*{Acknowledgements}
	P.I. was supported in part by NSF CAREER-DMS-2152401. R.V. was supported from NSF DMS-1954233, NSF DMS-2027299, U.S. Army 76649CS, and NSF+Simons Research Collaborations on the Mathematical and Scientific Foundations of Deep Learning. We thank Alexandros Eskenazis for helpful comments that improved the exposition in this paper.

\end{document}